\numberwithin{equation}{section}
\newlength{\dhatheight}
\newcommand{\doublehat}[1]{%
	\settoheight{\dhatheight}{\ensuremath{\hat{#1}}}%
	\addtolength{\dhatheight}{-0.35ex}%
	\hat{\vphantom{\rule{1pt}{\dhatheight}}%
		\smash{\hat{#1}}}}
\newcolumntype{M}[1]{>{\centering\arraybackslash}m{#1}}
\DeclareMathOperator{\adj}{adj}
\newtheorem{example}{Example}[section]
\newtheorem{thm}{Theorem}[section]
\newtheorem{cor}{Corollary}[section]
\newtheorem{note}{Note}[section]
\newtheorem{pro}{Proposition}[section]
\newtheorem{rem}{Remark}[section]
\newtheorem{defn}{Definition}[section]
\newtheorem{lemma}{Lemma}[section]
\begin{document}
	\vspace{-2cm}
	\markboth{S. P. Leka Amruthavarshini and R. Rajkumar}{Adjacency-diametrical matrix of a graph}
	\title{\LARGE\bf Adjacency-diametrical matrix of a graph}
	\author { S. P. Leka Amruthavarshini\footnote{Email: lekaamrutha@gmail.com}~ and  R. Rajkumar\footnote{Email: rrajmaths@yahoo.co.in (Corresponding author)} 
		\\ \small \it Department of Mathematics,
		\small \it The Gandhigram Rural Institute (Deemed to be University),\\
		\small \it Gandhigram-624 302, Tamil Nadu, India.
	}

	\date{}
	\maketitle
	\begin{abstract}
		The adjacency-diametrical matrix (AD matrix) of a connected graph $G$ with diameter $d$, denoted by $AD(G)$, is the matrix indexed by the vertices of $G$ in which the $(i,j)$-entry of $AD(G)$ is $1$ if $d_G(v_i,v_j)=1$, is  $d$ if $d_G(v_i,v_j)=d$, and $0$ otherwise, where $d_G(v_i,v_j)$ denotes the distance between the vertices $v_i$ and $v_j$ in $G$. We determine the spectrum of the AD matrix for paths, cycles, and double star graphs and obtain its determinant for a connected graph. We characterize a class of bipartite graphs using the coefficients of the characteristic polynomial and the eigenvalues of the AD matrix. We establish bounds relating the eigenvalues of the AD matrix to various graph invariants, and we determine the spectrum of the AD matrix for graphs formed by the join, lexicographic product, and Cartesian product operations under certain conditions on the constituent graphs.\\

		\noindent	\textbf{Keywords:} Graphs, Adjacency-diametrical matrix, Spectrum, Graph products
		
		\noindent	\textbf{MSC:} 05C12, 05C50, 05C69, 05C15, 05C76
		
	\end{abstract}
	
	\section{Introduction}

We consider only undirected, simple and finite
graphs.  Let $G$ be a  graph  with vertex set $V(G)=\{v_1, v_2, \ldots, v_n\}$ and edge set $E(G)$. If $v_i$ and $v_j$ are adjacent in $G$, we write $v_i\sim v_j$. The adjacency matrix $A(G)$ is the $0$–$1$ matrix whose rows and columns correspond to the vertices of $G$. For $i,j\in\{1,2,
\ldots,n\}$, the $(i,j)$-entry  is $1$ exactly when $v_i$ and $v_j$ are adjacent. This definition extends to a weighted graph with an edge–weight function $w: E(G)\to\mathbb{R}^+$ by setting the $(i,j)$-entry of $A(G)$ to be $w(e)$ whenever $e=\{v_i, v_j\}\in E(G)$. 	Suppose $G$ is a connected graph with diameter $d$. For each integer $k$ with $1 \leq k \leq d$, the $k$-th adjacency matrix $A_k(G)$ is the $0-1$ matrix whose $(i,j)$-entry entry is $1$ if and only if $d_G(v_i,v_j)=k$, where $d_G(v_i,v_j)$ denotes the distance between the vertices $v_i$
and $v_j$, i.e., the length of a shortest path between $v_i$ and $v_j$. Note that $A_1(G)=A(G)$. The distance matrix $D(G)$ of $G$ is the  matrix  indexed by the vertices of $G$, where for $i,j \in \{1,2,\ldots,n\}$, the $(i,j)$-entry of $D(G)$ is $d_{G}(v_i,v_j)$. It is evident that $D(G)=\sum_{k=1}^dkA_k(G)$.

A large literature is devoted to the study of adjacency matrix of a graphs and its spectrum. We refer the reader to \cite{bapat2010graphs,cvetkovic2010introduction} for more details. The distance matrix is another significant matrix, providing valuable insights into the metric and spectral characteristics of a graph. It has a wide range of applications, including in chemistry, molecular biology, psychology, and various other scientific fields. For more details of this matrix and its variants, we refer the reader to~\cite{aouchiche2014survey,aouchiche2013twolaplacians,hogben2022spectra,stevanovic2012spectral}.	
Recently,	De Ville introduced the notion of a generalized distance matrix in~\cite{deville2022thegeneralized}. 
	The generalized distance matrix of a connected $G$ with diameter $d$ is defined as the matrix $\mathcal{M}(f; G)$, whose entries are given by $
	\mathcal{M}(f; G)_{v_i,v_j} = f(d_{G}(v_i,v_j)),
$ where $f$ is a real-valued function on $\{0,1,\ldots, d\}$. It is shown that several familiar matrices, including the adjacency matrix, Laplacian matrix, and distance matrix, appear as special instances of this construction. Moreover, the paper presents applications of the spectral properties of this matrix to competition models in ecology and to rapidly mixing Markov chains.
	
	Now, we define the  \textit{adjacency-diametrical matrix} (or \textit{AD matrix}) of a connected graph $G$ with diameter $d$. It is denoted by $AD(G)$, and is given by $$AD(G)=A(G)+dA_d(G).$$ Equivalently, for $i,j=1,2,\ldots,n$, the $(i,j)$-entry of $AD(G)$ is $1$ if $d_G(v_i,v_j)=1$; is $d$ if $d_G(v_i,v_j)=d$, and $0$ otherwise. Note that $AD(G)$ is a special case of the generalized distance matrix obtained by taking the function $f$ satisfying $f(1)=1$, $f(d)=d$, and $f(i)=0$ for every $i=2,3,\dots,d-1$. 
	The AD matrix is introduced to explore how a graph’s diameter relates to its other invariants and to examine how the matrix’s eigenvalues, eigenvectors, and characteristic polynomial interact with the diameter and related structural properties of the graph. 
	
		Note that $d=1$ if and only if $AD(G)=A(G)=D(G)$ and $G=K_n$. If $d=2$, then $AD(G)=D(G)$, and conversely $AD(G)=D(G)$ if and only if $d \in \{1,2\}$.

	We associate to $G$ a weighted graph $\mathcal{G}_G$ defined as follows: its vertex set is the same as that of $G$, and two vertices $v_i$ and $v_j$ are joined by an edge $e_{ij}$ of weight $1$ or $d$ precisely when $d_G(v_i,v_j)=1$ or $d_G(v_i,v_j)=d$ respectively. Clearly, $A(\mathcal{G}_G)=AD(G)$.

	We now recall some definitions and notations. In a graph $G$, the degree $d(v)$ of its vertex $v$  is the number of edges incident with it. If all vertex in $G$ have the degree $r$, then $G$ is called an $r$-regular graph. In a weighted graph $G$, the weighted degree of its vertex $v$ is defined as the sum of the weights of the edges incident with $v$.  The weighted graph $G$ is said to be weighted regular of degree $r$ or weighted $r$-regular, if every vertex of $G$ has weighted degree $r$. As usual $P_n$, $C_n$, and $K_n$ denote the path, cycle and complete graph each on $n$ vertices, respectively. The star graph on $n$ vertices is denoted by $S_{n}$. The double star graph  $S_{n_1,n_2}$ constructed by taking two disjoint star graphs $S_{n_1}$ and $S_{n_2}$ and then joining their centers by a single edge. A graph is called bipartite if its vertex set $V$ can be partitioned into two subsets $V_1$ and $V_2$ such that every edge of $G$ connects a vertex from $V_1$ to a vertex from $V_2$. A graph $G$ is said to be planar if it can be drawn in a plane without intersecting edges. 
	Two vertices in a connected graph $G$ are said to be antipodal, if the distance between these vertices is equal to the diameter of $G$.  The chromatic number $\chi(G)$ of a graph $G$ is  the minimum number of colors needed for coloring the vertices of $G$ such that no two adjacent vertices receives the same color.
The clique number $\omega(G)$ of a graph $G$  is the number of vertices in the maximal complete subgraph in $G$. 	
A distance-regular graph is a regular graph in which, for any vertices $u$ and $v$, the number of vertices at distance $j$ from $u$ and distance $k$ from $v$ depends only on $j$, $k$, and the distance between $u$ and $v$.

	Let $M$ be an $n \times n$ matrix. The trace of $M$ is written as $\mbox{tr}(M)$, and its characteristic polynomial is denoted by $\Phi_M(x)$. The eigenvalues of $M$ are denoted by $\lambda_i(M)$ for $i = 1,2,\dots,n$, and the multiset of these eigenvalues, called the spectrum of $M$, is written as $\mbox{Spec}(M)$.	For two real $n \times n$ matrices $A_1$ and $A_2$, we say that $\lambda_1$ and $\lambda_2$ are co-eigenvalues of $A_1$ and $A_2$ if there exists a vector $X \in \mathbb{R}^n$ such that
	$A_i X = \lambda_i X$ for $i = 1,2$.	We denote by $I_n$ the $n \times n$ identity matrix, by $\mathbf{1}$ the $n \times 1$ all-ones vector, and by $\mathbf{0}$ the zero matrix.

	 The rest of the paper is structured as follows: In Section~\ref{s4}, we compute the  spectrum of the AD matrix of  paths, cycles and double star graphs. In Section~\ref{s5}, we obtain the determinant of the AD matrix of a connected graph.  In addition, a special class of bipartite graphs, called diametrical bipartite graphs, and characterize this class using the coefficients of the characteristic polynomial and the eigenvalues of the AD matrix. In Section~\ref{s6}, we establish bounds relating the eigenvalues of the AD matrix to various graph invariants. In Section~\ref{s7}, we determine the spectrum of the AD matrix formed by the join, lexicographic product, and Cartesian product operations.

	\section{Spectrum of the AD matrix of some graphs}\label{s4}
		We refer to the spectrum of the AD matrix of a graph $G$ as the \textit{adjacency-diametrical spectrum} (or \textit{AD-spectrum}) of $G$. We begin by computing the characteristic polynomial of the AD matrix of the path graph $P_n$.
	\begin{thm}
		\begin{enumerate}[(i)]
			\item  The characteristic polynomial of $AD(P_2)$ is $x^2-1$. 
			\item  For $n \geq 3 $, the characteristic polynomial of $AD(P_n)$ is 
			\[  \Phi_{n}(x) -(n-1)^2\Phi_{n-2}(x)+2(1-n),\]
			
			where $ \Phi_{n}(x)=\overset{n}{\underset{k=1}{\prod}} \left(x-2 \cos \frac{\pi k}{n+1}\right)$.
		\end{enumerate}
	\end{thm}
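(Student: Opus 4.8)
Part (i) is immediate: since $P_2$ has diameter $1$, we have $AD(P_2)=A(P_2)=\begin{pmatrix}0&1\\1&0\end{pmatrix}$, whose characteristic polynomial is $x^2-1$. The substance is in part (ii), and the plan is to exploit the fact that $AD(P_n)$ is a rank-two perturbation of the ordinary adjacency matrix. Label the vertices $v_1,\dots,v_n$ along the path, so $A(P_n)$ is the tridiagonal matrix with $1$'s on the two off-diagonals. Since $\mathrm{diam}(P_n)=n-1$ and the unique pair of vertices at this distance is $\{v_1,v_n\}$, the diametrical part contributes only in the two corners, giving
$$AD(P_n)=A(P_n)+(n-1)\bigl(e_1e_n^{\top}+e_ne_1^{\top}\bigr),$$
where $e_1,e_n$ are the first and last standard unit vectors. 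Writing $d=n-1$, $B=xI_n-A(P_n)$, $U=[\,e_1\ \ e_n\,]$ and $W=\begin{pmatrix}0&-d\\-d&0\end{pmatrix}$, one checks directly that $xI_n-AD(P_n)=B+UWU^{\top}$.

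First I would apply Sylvester's determinant identity (the matrix-determinant lemma for a rank-two update), valid as a polynomial identity because $B$ is invertible for generic $x$:
$$\det\!\bigl(B+UWU^{\top}\bigr)=\det(B)\,\det\!\bigl(I_2+WU^{\top}B^{-1}U\bigr).$$
Here $\det(B)=\Phi_n(x)$ is precisely the characteristic polynomial of $A(P_n)$, since the eigenvalues of $A(P_n)$ are the $2\cos(\pi k/(n+1))$. The matrix $U^{\top}B^{-1}U$ collects the $(1,1),(1,n),(n,1),(n,n)$ entries of $B^{-1}$, and by the reversal symmetry of the path together with the symmetry of $B$ these take only two values, $\alpha:=(B^{-1})_{11}=(B^{-1})_{nn}$ and $\beta:=(B^{-1})_{1n}=(B^{-1})_{n1}$. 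Evaluating the resulting $2\times2$ determinant then gives
$$\det\!\bigl(xI_n-AD(P_n)\bigr)=\Phi_n(x)\Bigl[(1-d\beta)^2-d^2\alpha^2\Bigr].$$

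Next I would compute $\alpha$ and $\beta$ through cofactors. Deleting row and column $1$ from $B$ leaves the tridiagonal matrix of $P_{n-1}$, so $\alpha=\Phi_{n-1}(x)/\Phi_n(x)$; deleting row $n$ and column $1$ leaves a triangular matrix with all diagonal entries equal to $-1$, whose determinant is $(-1)^{n-1}$, so that $\beta=(-1)^{n+1}(-1)^{n-1}/\Phi_n(x)=1/\Phi_n(x)$. Substituting these into the bracketed expression and clearing the denominator yields
$$\det\!\bigl(xI_n-AD(P_n)\bigr)=\Phi_n(x)-2d+d^2\,\frac{1-\Phi_{n-1}(x)^2}{\Phi_n(x)}.$$

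The crux of the argument, which I expect to be the main obstacle, is the final simplification: I must show that the displayed fraction is in fact a polynomial equal to $-\Phi_{n-2}(x)$. This amounts to the Chebyshev-type identity
$$\Phi_{n-1}(x)^2-\Phi_n(x)\,\Phi_{n-2}(x)=1,$$
which I would establish from the three-term recurrence $\Phi_n=x\Phi_{n-1}-\Phi_{n-2}$ (satisfied by the $\Phi_n$, as they are essentially Chebyshev polynomials of the second kind) by verifying that the quantity $\Phi_{n-1}^2-\Phi_n\Phi_{n-2}$ is independent of $n$ and equals $1$ at the base case. Substituting this identity and $d=n-1$ then collapses the expression to $\Phi_n(x)-(n-1)^2\Phi_{n-2}(x)+2(1-n)$, as required.
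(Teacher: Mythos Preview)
Your argument is correct. Both your proof and the paper's reduce the problem to a $2\times 2$ determinant and invoke the Cassini-type identity for Chebyshev polynomials, but the decompositions differ. The paper relabels so that the two endpoints occupy the first two rows/columns and then applies a Schur complement with respect to the remaining $(n-2)\times(n-2)$ block $xI_{n-2}-A(P_{n-2})$; this produces an expression in $\Phi_{n-2},\Phi_{n-3}$ (and, after the Cassini identity, $\Phi_{n-4}$), which is then collapsed to the final form by three applications of the three-term recurrence. You instead keep the natural labeling, isolate the diametrical contribution as a rank-two update $AD(P_n)=A(P_n)+(n-1)(e_1e_n^{\top}+e_ne_1^{\top})$, and apply the matrix determinant lemma against the full $xI_n-A(P_n)$; the cofactors you need are $\Phi_{n-1}$ and the constant $1$, and a single use of $\Phi_{n-1}^{2}-\Phi_n\Phi_{n-2}=1$ finishes the job with no further recurrence manipulations. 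Your route is shorter and makes the structure (``known polynomial plus a corner perturbation'') more transparent; the paper's Schur-complement route has the minor advantage that the needed entries of $(xI_{n-2}-A(P_{n-2}))^{-1}$ are read off without computing a nontrivial off-diagonal minor.
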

	
	\begin{proof}
		(i). Since $P_2 = K_2$, the result is straight forward.
		
		(ii). For $n\geq 3$, the diameter of $P_n$ is $n-1$. By a suitable relabeling of the vertices of $P_n$, we get
		\[AD(P_n)=
		\begin{bmatrix}
			0 & n-1 & 1 & \dots & 0 & 0 \\
			n-1 & 0 & 0 &\dots & 0 & 1 \\
			1 & 0 & 0 & \dots & 0 & 0 \\
			\vdots & \vdots &  \vdots  & \ddots & \vdots& \vdots \\
			0 & 0 & 0 & \dots & 0 & 1\\
			0 & 1 & 0 & \dots & 1 & 0 
		\end{bmatrix},
		\]		
		and the characteristic polynomial of $AD(P_n)$ is given by
		\begin{align}
			\Phi_{AD(P_n)}(x)&=\det \left(xI_n-AD(P_n)\right) \nonumber\\
			&=\det\left(
			\begin{bmatrix}
				B & C\\
				C^T & xI_{n-2}-A(P_{n-2}) 
			\end{bmatrix}\right) \nonumber\\
			&=\det(xI_{n-2}-A(P_{n-2})) \times \det\left( B - C (xI_{n-2}-A(P_{n-2}))^{-1} C^T \right),\label{e4.1}
		\end{align}	
		where $B=
		\begin{bmatrix}
			x & 1-n  \\
			1-n & x
		\end{bmatrix}$ and $C=
		\begin{bmatrix}
			-1 & 0 & \cdots & 0 & 0 \\
			0 & 0 & \cdots & 0 & -1 \\
		\end{bmatrix}.$\\
		
		For convenience, we write $\Phi_n (x) := \Phi_{A(P_{n})}(x)$ for all $n$ and it is known~(cf.~\cite[Theorem 3.7]{bapat2010graphs}) that
		\begin{equation}\label{e4.2}
			\Phi_n(x)= \overset{n}{\underset{k=1}{\prod}} \left(x-2 \cos \tfrac{\pi k}{n+1}\right).
		\end{equation}
				
		Also we have \begin{align}
				\det & \left( B - C (xI_{n-2}-A(P_{n-2}))^{-1} C^T \right) \nonumber\\
			& = \det \left(B-  \left(C~ \tfrac{1}{\Phi_{n-2}(x)} \adj(xI_{n-2}-A(P_{n-2}))  C^{T}\right)\right)\nonumber \\
			& = \det\left( B - \tfrac{1}{\Phi_{n-2}(x)} \begin{bmatrix}
				\Phi_{n-3}(x)  & 1  \\
				1 & \Phi_{n-3}(x)
			\end{bmatrix}\right) \nonumber \\
				& = \det\left(
			\begin{bmatrix}
				x-\tfrac{\Phi_{n-3}(x)}{\Phi_{n-2}(x)}& (1-n) -\tfrac{1}{\Phi_{n-2}(x)} \\
				(1-n) -\tfrac{1}{\Phi_{n-2}(x)} & x-\tfrac{\Phi_{n-3}(x)}{\Phi_{n-2}(x)}
			\end{bmatrix}\right) \nonumber\\
			&= 	\left(x^2-n^2+2n-1\right)- \tfrac{2x(\Phi_{n-3}(x))}{\Phi_{n-2}(x)} 
			+\tfrac{(\Phi_{n-3}(x))^2-1}{(\Phi_{n-2}(x))^2}+\tfrac{2(1-n)}{\Phi_{n-2}(x)}. \label{e4.3}
			\end{align}
		
 	Applying~\eqref{e4.2} and~\eqref{e4.3} in~\eqref{e4.1} we get 
		\begin{equation}\label{e2.4}
			\Phi_{AD(P_n)}(x)=\left(x^2-n^2+2n-1\right) \Phi_{n-2}(x) - 2x \Phi_{n-3}(x) 
			+\tfrac{(\Phi_{n-3}(x))^2-1}{\Phi_{n-2}(x)}+2(1-n).
		\end{equation} 
			
	It is known that the characteristic polynomial of the adjacency matrix of the path
	$P_n$ and  the Chebyshev polynomial of the second kind are related by $\Phi_{n}(x)=U_n(\tfrac{x}{2})$, see e.g.,~\cite{cvetkovic2010introduction}. If $t=\tfrac{x}{2}$, then $\Phi_{n-2}(x)=U_{n-2}(t)$, $\Phi_{n-3}(x)=U_{n-3}(t)$.
		
		Since $U_n(t)$ satisfies the identity $U^2_k(t)-U_{k-1}(t)U_{k+1}(t)=1$~\cite[Eqn. 3.21]{voll2010cassini}, for $k=n-3$ we obtain $\Phi^2_{n-3}(x)-\Phi_{n-4}(x)\Phi_{n-2}(x)=1$. This implies that the term $\tfrac{(\Phi_{n-3}(x))^2-1}{(\Phi_{n-2}(x))^2}$ equals $\Phi_{n-4}(x)$. Then~\eqref{e2.4} becomes,
		\begin{equation}\label{e2.5}
			\Phi_{AD(P_n)}(x)=\left(x^2-n^2+2n-1\right) \Phi_{n-2}(x) - 2x \Phi_{n-3}(x)+\Phi_{n-4}(x)+2(1-n)
		\end{equation}
		
		Also $U_n(t)$ satisfying the recurrence relation $U_{k}(t)=2tU_{k-1}(t)-U_{k-2}(t)$~\cite[Eqns. 1.6a, 1.6b]{mason2002chebyshev}, with $U_0(t)=1, U_1(t)=2t$. Then for $k=n-2$, $k=n-1$ and $k=n$ we get $\Phi_{n-4}(x)=x\Phi_{n-3}(x)-\Phi_{n-2}(x)$, $\Phi_{n-3}(x)=x\Phi_{n-2}(x)-\Phi_{n-1}(x)$ and $\Phi_{n}(x)=x\Phi_{n-1}(x)-\Phi_{n-2}(x)$ respectively. Substituting these in~\eqref{e2.5}, we obtain
		\begin{align}
			\Phi_{AD(P_n)}(x)&=\left(x^2-n^2+2n-1\right) \Phi_{n-2}(x) - 2x \Phi_{n-3}(x)+x\Phi_{n-3}(x)-\Phi_{n-2}(x)+2(1-n) \nonumber\\
			&=\left(x^2-n^2+2n-2\right) \Phi_{n-2}(x) - x \Phi_{n-3}(x)+2(1-n)\nonumber\\
			&=\left(x^2-n^2+2n-2\right) \Phi_{n-2}(x) - x^2 \Phi_{n-2}(x)+x\Phi_{n-1}(x)+2(1-n)\nonumber\\
			&=-\Phi_{n-2}(x) -(n-1)^2\Phi_{n-2}(x)+x\Phi_{n-1}(x)+2(1-n)\nonumber\\
			&=\Phi_{n}(x) -(n-1)^2\Phi_{n-2}(x)+2(1-n).\nonumber
		\end{align}
		This completes the proof.
	\end{proof} 
 The AD-spectrum of the cycle graph is determined in the following result.
	\begin{thm}
		For $n\geq 3$, we have
		\[
		\mbox{Spec}(AD(C_n)) =
		\begin{cases}
		\left\{ 2 \cos \frac{2 \pi k}{n}+ \frac{n}{2} \cos \pi k   \,:\, k = 1,2, \dots, n \right\}, & \text{if } n \text{~is even}; \\
			\left\{ 2 \cos  \frac{2 \pi k}{n} + (n-1) \cos \pi k\; \cos \frac{\pi k}{n}:\, k = 1,2, \dots, n \right\}, & \text{if } n \text{ is odd}.
		\end{cases}
		\]
	\end{thm}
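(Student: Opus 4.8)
The plan is to exploit the cyclic symmetry of $C_n$. Since $d_{C_n}(v_i,v_j)$ depends only on $(i-j)\bmod n$, every entry of $AD(C_n)$ is a function of $(i-j)\bmod n$, so $AD(C_n)$ is a circulant matrix. I would therefore read its eigenvalues directly off its first row via the standard circulant diagonalization: if the first row is $(c_0,c_1,\dots,c_{n-1})$ and $\omega=e^{2\pi i/n}$, then the eigenvalues are $\lambda_k=\sum_{j=0}^{n-1}c_j\omega^{jk}$ for $k=1,2,\dots,n$, with eigenvectors $(1,\omega^k,\dots,\omega^{(n-1)k})^T$.

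First I would write down the first row explicitly in each parity case. Labelling the vertices $0,1,\dots,n-1$ around the cycle, the distance from $v_0$ to $v_j$ is $\min(j,n-j)$, and the neighbours of $v_0$ are $v_1$ and $v_{n-1}$, so $c_1=c_{n-1}=1$. For $n$ even the diameter is $d=n/2$ and $v_0$ has a single antipodal vertex $v_{n/2}$, giving $c_{n/2}=n/2$ with all remaining entries $0$. For $n$ odd the diameter is $d=(n-1)/2$, and exactly two vertices lie at distance $d$ from $v_0$, namely $v_{(n-1)/2}$ and $v_{(n+1)/2}$, giving $c_{(n-1)/2}=c_{(n+1)/2}=(n-1)/2$ and all other entries $0$.

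I would then substitute into the circulant formula. Using $\omega^{(n-1)k}=\omega^{-k}$, both cases share the leading term $\omega^k+\omega^{-k}=2\cos\frac{2\pi k}{n}$. For $n$ even the remaining contribution is $\frac{n}{2}\omega^{(n/2)k}$, and since $\omega^{(n/2)k}=e^{\pi ik}=\cos\pi k$, this yields $\lambda_k=2\cos\frac{2\pi k}{n}+\frac{n}{2}\cos\pi k$. For $n$ odd the remaining contribution is $\frac{n-1}{2}\bigl(\omega^{(n-1)k/2}+\omega^{(n+1)k/2}\bigr)$; factoring out $\omega^{nk/2}$ turns the bracket into $\omega^{nk/2}(\omega^{-k/2}+\omega^{k/2})=2\cos\pi k\,\cos\frac{\pi k}{n}$, whence $\lambda_k=2\cos\frac{2\pi k}{n}+(n-1)\cos\pi k\,\cos\frac{\pi k}{n}$. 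Both expressions agree with the statement.

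The only step requiring care is the odd case, where the two antipodal entries must be combined: the key manipulation is to pull out the half-integer power $\omega^{nk/2}$ so that the surviving factor $\omega^{k/2}+\omega^{-k/2}$ collapses to $2\cos\frac{\pi k}{n}$. Beyond this, the argument is a routine application of the circulant spectrum together with $\omega^n=1$ and Euler's formula, so I expect no genuine obstacle, only the bookkeeping of correctly identifying the antipodal indices and the diameter in each parity.
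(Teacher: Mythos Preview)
Your proposal is correct and follows essentially the same approach as the paper: both recognize $AD(C_n)$ as circulant, identify the nonzero first-row entries according to the parity of $n$, apply the standard circulant eigenvalue formula $\lambda_k=\sum_j c_j\omega^{jk}$, and simplify via Euler's formula, with the same factoring trick $\omega^{nk/2}(\omega^{-k/2}+\omega^{k/2})$ in the odd case. Your write-up is in fact slightly cleaner in correctly attributing the single antipodal entry $c_{n/2}$ to the even case and the pair $c_{(n\pm1)/2}$ to the odd case.
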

	\begin{proof}
		Note that the diameter of $C_n$ is $\frac{n}{2}$ when $n$ is even and $\frac{n-1}{2}$ when $n$ is odd. Moreover, the matrix $AD(C_n)$ is circulant: if $n$ is odd, the nonzero entries in its first row are $c_1 = c_{n-1} = 1$ and $c_{\tfrac{n}{2}} = \tfrac{n}{2}$; if $n$ is even, the nonzero entries in its first row are $c_1 = c_{n-1} = 1$ and $c_{\tfrac{n-1}{2}} = c_{\tfrac{n+1}{2}} = \tfrac{n-1}{2}$. 
		It is known that for an $n \times n$ circulant matrix $C$ with first row $(c_0,c_1,\dots,c_{n-1})$, the eigenvalue $\lambda_k$ of $C$ is given by 
		\[\lambda_k=\overset{n-1}{\underset{j=0}{\sum}}c_j \, \omega^{jk}_n, \qquad  j=0,1,\dots,n-1,\] where $\omega_n = e^\frac{2 \pi i}{n}$ is a primitive $n$-th root of unity.
		
			Suppose $n$ is even. 	The eigenvalues $\lambda_k$ of $AD(C_n)$ for $k=0,1\dots,n-1$ are given by 
		\[	
		\lambda_k =	\omega^k +(\omega^k)^{n-1}+\tfrac{n}{2}(\omega^k)^{\tfrac{n}{2}}\\
			 = e^\frac{2 \pi i k}{n} +e^{-\frac{2 \pi i k}{n}}+\tfrac{n}{2} (e^\frac{2 \pi i k}{n})^{\frac{n}{2}}\\
			 =2 \cos \tfrac{2 \pi k}{n}+ \tfrac{n}{2} \cos \pi k.\]

		Suppose $n$ is odd.   The  eigenvalues $\lambda_k$ of $AD(C_n)$ for $k=0,1\dots,n-1$ are given by  
	  	\[
		\begin{aligned}
			\lambda_k&=\omega^k +(\omega^k)^{n-1}+\left(\tfrac{n-1}{2}\right)((\omega^k)^{\frac{n-1}{2}}+(\omega^k)^{\frac{n+1}{2}})\\
			& = \omega^k +\omega^{-k}+\left(\tfrac{n-1}{2}\right) \left\{(\omega^k)^{\tfrac{n}{2}} \left((\omega^k)^{\frac{-1}{2}}+(\omega^k)^{\frac{1}{2}}\right) \right\}\\
			& = 2 \cos \tfrac{2 \pi k}{n}+\left(\tfrac{n-1}{2}\right) \left\{e^{\pi i k} \left(e^{-\tfrac{\pi i k}{n}}+e^{\tfrac{\pi i k}{n}}\right)\right\}\\
			& = 2 \cos \tfrac{2 \pi k}{n}+ (n-1) \left\{\cos \pi k \,\cos \tfrac{\pi k}{n}\right\}
		\end{aligned}
		\]
	 and the proof is complete. 
	\end{proof}
	The characteristic polynomial of the AD matrix of the double star graph $S_{n_1,n_2}$ is described in the following result.
	\begin{thm}
	The characteristic polynomial of $AD(S_{n_1,n_2})$ is 
		\[x^{n_1+n_2-4}\left(x^4-\left(9n_1n_2-8n_1-8n_2+8\right)x^2+4\left(n_1n_2-n_1-n_2+1\right)\right).\]
	\end{thm}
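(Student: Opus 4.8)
The plan is to exploit the strong symmetry of $S_{n_1,n_2}$. First I would fix a labelling: let $c_1,c_2$ be the two centres, let $a_1,\dots,a_{n_1-1}$ be the leaves adjacent to $c_1$, and let $b_1,\dots,b_{n_2-1}$ be the leaves adjacent to $c_2$. A short distance computation shows that $d(c_1,c_2)=d(c_1,a_i)=d(c_2,b_j)=1$, that any two leaves of the same star as well as any leaf-to-opposite-centre pair are at distance $2$, and that $d(a_i,b_j)=3$; hence the diameter is $d=3$ (for $n_1,n_2\ge 2$). Writing $AD(S_{n_1,n_2})$ in the block order $\bigl(c_1,\,c_2,\,a\text{-leaves},\,b\text{-leaves}\bigr)$ then produces a matrix whose only nonzero blocks are the two central incidences of weight $1$ (between $c_1,c_2$, and between each centre and its own leaves) and the two leaf--leaf all-ones blocks $3J$ of weight $d=3$, with vanishing diagonal blocks on the leaf sets.

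Next I would read off the eigenvalue $0$ directly from the leaf symmetry. For any vector supported on the $a$-leaves whose entries sum to zero, one checks that every row sum of $AD(S_{n_1,n_2})$ vanishes, so it is an eigenvector with eigenvalue $0$; this subspace has dimension $n_1-2$, and symmetrically the $b$-leaves contribute dimension $n_2-2$. Together these account for the factor $x^{\,n_1+n_2-4}$ in the characteristic polynomial.

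The remaining four eigenvalues arise from the equitable partition with cells $\{c_1\},\{c_2\},\{a\text{-leaves}\},\{b\text{-leaves}\}$. Since $AD(S_{n_1,n_2})$ is symmetric and this partition is equitable, $\mathbb{R}^{n_1+n_2}$ splits into the partition-constant subspace and its orthogonal complement, the latter being exactly the eigenvalue-$0$ space identified above; consequently $\Phi_{AD(S_{n_1,n_2})}(x)=x^{\,n_1+n_2-4}\,\Phi_B(x)$, where $B$ is the $4\times 4$ quotient matrix recording the (constant) block row sums. Reading these sums off the blocks gives
\[
B=\begin{bmatrix} 0 & 1 & n_1-1 & 0 \\ 1 & 0 & 0 & n_2-1 \\ 1 & 0 & 0 & 3(n_2-1) \\ 0 & 1 & 3(n_1-1) & 0 \end{bmatrix}.
\]

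Finally I would compute $\Phi_B(x)=\det(xI_4-B)$ by a single cofactor expansion. Setting $p=n_1-1$ and $q=n_2-1$ the determinant collapses to $x^4-(9pq+p+q+1)x^2+4pq$, and substituting $n_1=p+1,\ n_2=q+1$ yields exactly $x^4-(9n_1n_2-8n_1-8n_2+8)x^2+4(n_1n_2-n_1-n_2+1)$, so that multiplying by $x^{\,n_1+n_2-4}$ gives the claimed polynomial. The main obstacle is purely bookkeeping: correctly handling the weight-$3$ leaf--leaf blocks $3J$ and their contribution to the quotient row sums, together with the justification that the orthogonal complement of the partition-constant subspace consists solely of eigenvalue-$0$ vectors; once these are in place, the $4\times 4$ determinant is routine.
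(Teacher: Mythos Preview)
Your proposal is correct and follows essentially the same approach as the paper: both arguments write $AD(S_{n_1,n_2})$ in the same $4$-block form, identify the $(n_1+n_2-4)$-dimensional null space coming from zero-sum leaf vectors, and reduce the remaining eigenvalues to the identical $4\times 4$ quotient matrix $B$. The only difference is terminological---you invoke the equitable partition/quotient matrix framework, while the paper argues directly that the remaining eigenvectors must have the form $[\alpha_1,\alpha_2,\alpha_3 J,\alpha_4 J]^T$---but the content is the same.
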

	\begin{proof}
	By suitably arranging the vertices of $S_{n_1,n_2}$, it can be seen that
		\[AD(S_{n_1,n_2})=\begin{bmatrix}
		0 & 1 & J_{1 \times (n_1-1)} & \bm{0}_{1 \times (n_2-1)}\\
		1 & 0 & \bm{0}_{1 \times (n_1 - 1)} & J_{1 \times (n_2 -1)}\\

		J_{(n_1 - 1) \times 1} & \bm{0}_{(n_1-1) \times 1} & \bm{0}_{(n_1-1) \times (n_1-1)} & 3J_{(n_1-1) \times (n_2-1)}\\
		\bm{0}_{(n_2-1) \times 1} & J_{(n_2-1) \times 1} & 3J_{(n_2-1) \times (n_1-1)} & \bm{0}_{(n_2-1) \times (n_2-1)}
		\end{bmatrix}.\]

		Since each row of $\bm{0}_{(n_1-1) \times (n_1-1)}$ (resp. $\bm{0}_{(n_2-1) \times (n_2-1)}$) has equal row sum $0$, the vector $\bm{1}_{(n_1-1) \times 1}$ (resp. $\bm{1}_{(n_2-1) \times 1}$) is an eigenvector corresponding to the eigenvalue $0$. Assume that the remaining eigenvectors $X_i$ for $i=2,3,\dots,n_1-1$ (resp. $Y_j$ for $j=2,3,\dots,n_2-1$) are chosen orthogonal to $\bm{1}_{(n_1-1) \times 1}$ (resp. $\bm{1}_{(n_2-1) \times 1}$). 
		Then $ \left[
		0 \quad 0 \quad X^T_i \quad \bm{0}_{1 \times (n_2-1)}
		\right]^T$ 
		$\big(\text{resp.} \left[
			0 \quad  0 \quad \bm{0}_{1 \times (n_1-1)} \quad Y^T_j
		\right]^T\big)$
       is an eigenvector of $AD(S_{n_1,n_2})$ corresponding to the eigenvalue $0$.

		This implies that for $i=2,3,\dots,n_1-1,$ and for $j=2,3,\dots,n_2-1$, the vectors
		$\left[
		0 \quad 0 \quad X_i \quad \bm{0}_{1 \times (n_2-1)}
		\right]^T$ and 
		$\left[
	 0 \quad 0 \quad \bm{0}_{1 \times (n_1-1)} \quad Y_j
	 \right]^T$ are eigenvectors of $AD(S_{n_1,n_2})$ corresponding to the eigenvalues $0$ with multiplicity $n_1-2$ and $0$ with multiplicity $n_2-2$ respectively. Therefore $0$ is an eigenvalue of $AD(S_{n_1,n_2})$ with multiplicity $n_1+n_2-4$. In this way we obtained $n_1+n_2-4$ eigenvalues. 		  
	 All these eigenvectors are orthogonal to $\left[
	 1 \quad 0 \quad \bm{0}_{1 \times (n_1-1)} \quad \bm{0}_{1 \times (n_2-1)}
	 \right]^T$,
	 $\left[
	 0 \quad 1 \quad \bm{0}_{1 \times (n_1-1)} \quad \bm{0}_{1 \times (n_2-1)}
	\right]^T$,
	  $\left[
	 0 \quad 0 \quad J_{1 \times (n_1-1)} \quad \bm{0}_{1 \times (n_2-1)}
	\right]^T$ 	 	 and 	 	 $\left[
	 	0 \quad 0 \quad \bm{0}_{1 \times (n_1-1)} \quad J_{1 \times (n_2-1)}
	\right]^T$.

	 Hence, the space spanned by these four vectors and the space spanned by the remaining four eigenvectors of $AD(S_{n_1,n_2})$ are same. Therefore, the remaining eigenvectors of $AD(S_{n_1,n_2})$ are of the form 	  $$X=\left[
	 \alpha_1 \quad \alpha_2 \quad \alpha_3J_{1 \times (n_1-1)} \quad \alpha_4 J_{1 \times (n_2-1)}
	 \right]^T$$ for some $(\alpha_1, \alpha_2, \alpha_3, \alpha_4) \neq (0,0,0,0)$.  	 
	 If $\beta$ is an eigenvalue of $AD(S_{n_1,n_2})$ corresponding to the eigenvector $X$, then \[AD(S_{n_1,n_2})X = \beta X, \]	 
%
	 which is equivalent to the system 	 
	 \[\begin{bmatrix}
	 -\beta & 1  & n_1-1 & 0\\
	 1 &  -\beta & 0 & n_2-1\\
	 1 & 0 & -\beta & 3(n_2-1)\\
	 0 & 1 & 3(n_1-1) & -\beta
	 \end{bmatrix} 
	 \begin{bmatrix}
	 \alpha_1 \\ \alpha_2 \\ \alpha_3 \\ \alpha_4
	 \end{bmatrix} = 
	 \begin{bmatrix}
	  0 \\ 0 \\ 0 \\ 0
	 \end{bmatrix}.\] 
	 
	 The above system admits a nontrivial solution if and only if the square matrix on the left-hand side is non-singular. Hence, the remaining four eigenvalues of $AD(S_{n_1,n_2})$ are the eigenvalues of this matrix
	 \[\begin{bmatrix}
	 	0 & 1 & n_1-1 & 0\\
	 	1 & 0 & 0 & n_2-1\\
	 	1 & 0 & 0 & 3(n_2-1)\\
	 	0 & 1 & 3(n_1-1) & 0
	 \end{bmatrix}\]
	 whose characteristic polynomial is $x^4-\left(9n_1n_2-8n_1-8n_2+8\right)x^2+4\left(n_1n_2-n_1-n_2+1\right)$.
	 This concludes the proof.	 
	\end{proof}
	
	\section{Determinant}\label{s5}
	\begin{defn}\normalfont
		
		Let $G$ be a connected graph. A set $P$ of vertices in $G$, with $|P|\geq 3$ is said to be an \textit{adjacency-diametrical cycle} of $G$ if the vertices in $P$ admit a cyclic ordering in which each pair of consecutive vertices are either adjacent or antipodal. 
		
		A partition $\mathcal S= \left\{P_1,P_2,\dots,P_k \right\}$ of a subset of $V(G)$ is said to be a \textit{adjacency-diametrical partition} of $G$ if its each part $P_i$ satisfies one of the following: (i) $P_i$ is of size $\geq 3$ and is an adjacency-diametrical cycle of $G$, (ii) $P_i$ is of size $2$ and its vertices are either adjacent or antipodal.
		
		In addition, if $\mathcal S$ is a partition of $V(G)$, then it is said to be a \textit{spanning adjacency-diametrical partition} of $G$.
		
		Let $p(\mathcal S)$ and $p_{1}(\mathcal S)$ denote the number of parts in an adjacency-diametrical partition $\mathcal S$ of $G$ which have size $2$ and $\geq 3$, respectively. 
	\end{defn}

	\begin{example}\normalfont
		For the graph $G$ shown in Figure~\ref{graph2},		
	 $\{\{1,7\},\{2,3,5,6\}\}$ is an adjacency-diametrical partition of $G$, and $\{\{1,3,5\},\{2,6\},\{7,4\}\}$ is a spanning adjacency-diametrical partition of $G$.	
	\end{example}
	 \begin{figure}[ht]
		\begin{center}
			\includegraphics[scale=1.0]{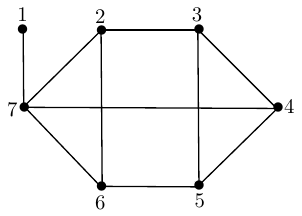}\caption{The graph $G$}\label{graph2}
		\end{center}
	\end{figure}
	\begin{lemma}\label{one-one-cycle}
		Let $G$ be a connected graph with at least three vertices. There is a one-to-one correspondence between cycles of length $k$ in the associated weighted graph $\mathcal G$  and adjacency–diametrical cycles of length $k$ in $G$.
	\end{lemma}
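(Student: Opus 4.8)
The plan is to reduce the statement to the single observation that the edge set of $\mathcal{G}$ encodes exactly the ``adjacent-or-antipodal'' relation of $G$, after which the two notions of cycle become the same cyclic sequence of vertices, read under two equivalent edge relations. So the first step I would record is the adjacency rule for $\mathcal{G}$: by the construction of the associated weighted graph, distinct vertices $u,v$ are joined by an edge of $\mathcal{G}$ if and only if $d_G(u,v)=1$ or $d_G(u,v)=d$, that is, if and only if $u$ and $v$ are adjacent or antipodal in $G$. This one equivalence is the engine of the whole argument.

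Next I would unwind both definitions of a cycle of length $k$ and compare them. A cycle of length $k$ in $\mathcal{G}$ is a cyclic sequence $u_1,u_2,\dots,u_k$ of distinct vertices (taken modulo rotation and reflection) with each consecutive pair $\{u_i,u_{i+1}\}$ (indices mod $k$) an edge of $\mathcal{G}$; by the adjacency rule this is precisely a cyclic ordering of $k$ distinct vertices of $G$ in which consecutive vertices are adjacent or antipodal, which is exactly the cyclic ordering witnessing that $\{u_1,\dots,u_k\}$ is an adjacency-diametrical cycle of $G$. I would then define the correspondence: a cycle $C$ of $\mathcal{G}$ is sent to the adjacency-diametrical cycle given by $V(C)$ with the cyclic order inherited from $C$, and an adjacency-diametrical cycle read along a witnessing ordering is sent to the cycle of $\mathcal{G}$ on the same vertices joining consecutive ones. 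Both maps are well defined by the adjacency rule, and they are mutually inverse because neither alters the vertices or their cyclic arrangement; only the reading of ``consecutive pair is an edge'' is reinterpreted.

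The one point I expect to require care is the bookkeeping of cyclic orderings, so that the map is genuinely one-to-one and not merely well defined on underlying vertex sets: a cycle of $\mathcal{G}$ determines, and is determined by, its vertices together with a cyclic order up to rotation and reflection, so the matched object on the $G$-side must be an adjacency-diametrical cycle together with such a witnessing order. Since the two edge relations agree pair-for-pair, this matching is forced, and no orderings are gained or lost in either direction. Finally I would check the degenerate case $d=1$, where $G=K_n$ and every pair of vertices is simultaneously adjacent and antipodal: here $\mathcal{G}=K_n$ and both notions collapse to ordinary cycles of $K_n$, so the correspondence persists, completing the argument.
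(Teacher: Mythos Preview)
Your proposal is correct and follows essentially the same approach as the paper: both arguments reduce the lemma to the observation that the edge relation of $\mathcal{G}_G$ is precisely ``adjacent or antipodal'' in $G$, and then construct mutually inverse maps between the two kinds of cycles. Your treatment is in fact more careful than the paper's on the bookkeeping of cyclic orderings (the paper's definition makes an adjacency--diametrical cycle a \emph{set} admitting a witnessing order, so a bijection with cycles of $\mathcal{G}_G$ tacitly requires tracking that order), and you also explicitly handle the degenerate case $d=1$, which the paper omits.
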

	
	\begin{proof}
		Let the diameter of $G$ be $d$.  Given a cycle $C_{\mathcal G}$ in $\mathcal G$, traverse it in cyclic order. Every edge of $C_{\mathcal G}$ is either a unit-weight edge that corresponds to an adjacency (an edge) of $G$, or a weight-$d$ edge that corresponds to an antipodal pair of vertices in $G$. Replacing each weight-$d$ edge of $C_{\mathcal G}$ by the corresponding pair of antipodal vertices in $G$, and interpreting unit edges as usual adjacency edges gives a cycle in $G$ that alternates between adjacencies and antipodal jumps, which is precisely an adjacency–diametrical cycle in $G$. Conversely, any adjacency–diametrical cycle of $G$ contains adjacencies of $G$ and antipodal relations; collapsing each antipodal relation to a single weight-$d$ edge yields a unique cycle of $\mathcal G$. These two constructions are inverse to each other, establishing the bijection. With the bijection established, the parity of cycle lengths is preserved by this correspondence.
	\end{proof}
	
	By an \textit{elementary subgraph of $\mathcal{G}_G$} we mean a subgraph of $\mathcal{G}_G$ (whose edge weights are $1$ or $d$),  and whose components are either a  cycle or an edge.
	
	The following result is a direct consequence of the facts discussed so for in this section.
	\begin{cor}\label{one-one2}
Let $G$ be a connected graph with diameter $d$.	There is a one-to-one correspondence between an adjacency-diametrical partition $\mathcal S = \{P_1, P_2, \dots, P_k\}$ of $G$ and an elementary subgraph of $\mathcal{G}_G$ with $k$ components (each of whose edge weights are $1$ or $d$), and whose component is a  cycle when $|P_i| \ge 3$, and an edge when $|P_i| = 2$.
	\end{cor}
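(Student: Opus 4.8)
The plan is to build the correspondence part-by-part, invoking Lemma~\ref{one-one-cycle} for the parts of size $\ge 3$ and the definition of $\mathcal{G}_G$ for the parts of size $2$. Given an adjacency-diametrical partition $\mathcal{S}=\{P_1,P_2,\dots,P_k\}$ of $G$, I would process each part separately. If $|P_i|\ge 3$, then $P_i$ is an adjacency-diametrical cycle, and Lemma~\ref{one-one-cycle} assigns to it a unique cycle $C_i$ in $\mathcal{G}_G$ on the same vertex set $P_i$, whose edges carry weights $1$ or $d$. If $|P_i|=2$, say $P_i=\{u,w\}$, then by definition $u$ and $w$ are adjacent or antipodal in $G$; in either case $\mathcal{G}_G$ contains exactly one edge $e_i$ between them, of weight $1$ or $d$ respectively. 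I would then let $H$ be the subgraph of $\mathcal{G}_G$ obtained as the union of these cycles $C_i$ and edges $e_i$.

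The key observation is that the parts of a partition are pairwise disjoint, so the vertex sets $P_i$ are pairwise disjoint, and hence the cycles and edges just constructed are pairwise vertex-disjoint. Consequently $H$ is genuinely a disjoint union of cycles and edges, i.e.\ an elementary subgraph of $\mathcal{G}_G$, and its components are exactly the $C_i$ and $e_i$, so $H$ has precisely $k$ components; the component corresponding to $P_i$ is a cycle when $|P_i|\ge 3$ and an edge when $|P_i|=2$, as required.

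For the reverse direction, I would start from an elementary subgraph $H$ of $\mathcal{G}_G$ with $k$ components and read off the vertex set of each component. A cycle component, by Lemma~\ref{one-one-cycle}, corresponds to a unique adjacency-diametrical cycle of $G$ on the same vertices; an edge component $\{u,w\}$ (of weight $1$ or $d$) yields a pair of vertices that are adjacent or antipodal in $G$. Collecting these vertex sets gives a family of pairwise-disjoint subsets of $V(G)$, each of which is either an adjacency-diametrical cycle of size $\ge 3$ or an adjacent/antipodal pair, which is exactly an adjacency-diametrical partition of $G$ with $k$ parts.

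Finally, I would check that the two constructions are mutually inverse. Since Lemma~\ref{one-one-cycle} already supplies a bijection on cycle components and the edge correspondence is manifestly involutive, applying one construction followed by the other returns the original object componentwise. The only point requiring care is the bookkeeping that disjointness of parts matches vertex-disjointness of components in both directions, which I expect to be the main---though routine---obstacle; once that is noted, the stated one-to-one correspondence follows immediately, so the corollary is indeed a direct consequence of the lemma.
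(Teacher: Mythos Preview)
Your proposal is correct and matches the paper's approach: the paper does not give an explicit proof but simply states that the corollary is ``a direct consequence of the facts discussed so far in this section,'' meaning exactly the combination of Lemma~\ref{one-one-cycle} for the cycle parts and the definition of $\mathcal{G}_G$ for the size-$2$ parts that you spell out. Your write-up is just a careful elaboration of this immediate deduction.
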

	
		Now we proceed to determine the determinant of the AD matrix of a connected graph.
	\begin{thm}\label{thm 5.1}
		Let $G$ be a connected graph with vertex set $V(G)=\{1,2,\dots,n\}$ and having diameter $d$. Then
		\begin{equation}\label{e5.1}
			\det (AD(G)) = \sum (-1)^{n-p(\mathcal S)-p_1(\mathcal S)} 2^{p_{1}(\mathcal S)} d^{2 a(\mathcal S)+a_{1}(\mathcal S)},
		\end{equation}
		where the summation is over all the spanning adjacency-diametrical partitions $\mathcal S$ of $G$. Here $a(\mathcal S)$ and $a_{1}(\mathcal S)$ represents the number of pairs of antipodal vertices in all the parts $P_i$ of size $2$ and $\ge 3$ in $\mathcal S$, respectively.
	\end{thm}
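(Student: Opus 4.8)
The plan is to expand $\det(AD(G))$ by the Leibniz formula and then reorganize the surviving terms according to the cycle structure of the contributing permutations, identifying these cycle structures with spanning elementary subgraphs of $\mathcal{G}_G$ and hence, by Corollary~\ref{one-one2}, with spanning adjacency-diametrical partitions of $G$. The arithmetic of signs, orientation multiplicities, and edge weights then reassembles into the claimed formula.

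First I would write $\det(AD(G)) = \sum_{\sigma \in S_n}\operatorname{sgn}(\sigma)\prod_{i=1}^{n} AD(G)_{i,\sigma(i)}$. Since the diagonal of $AD(G)$ vanishes, any $\sigma$ with a fixed point contributes a zero summand, so only fixed-point-free permutations survive, that is, those whose cycle decomposition uses only cycles of length $\geq 2$. Furthermore, $\prod_i AD(G)_{i,\sigma(i)}\neq 0$ only when consecutive vertices in every cycle of $\sigma$ are joined by an edge of $\mathcal{G}_G$; thus each surviving $\sigma$ determines a spanning subgraph of $\mathcal{G}_G$ whose components are the vertex-disjoint cycles of $\sigma$ (reading $2$-cycles as single edges). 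This is precisely a spanning elementary subgraph, and by Corollary~\ref{one-one2} it corresponds to a unique spanning adjacency-diametrical partition $\mathcal{S}$, in which the $2$-cycles of $\sigma$ give the size-$2$ parts and the cycles of length $\geq 3$ give the parts of size $\geq 3$.

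Next I would compute, for a fixed such $\mathcal{S}$, the combined contribution of all permutations that realize it. A $2$-cycle $(i\,j)$ arises from exactly one permutation and contributes $AD(G)_{ij}AD(G)_{ji}=AD(G)_{ij}^{\,2}$, equal to $1$ for an adjacent pair and $d^2$ for an antipodal pair; a cycle of length $\geq 3$ arises from exactly two permutations, namely its two cyclic orientations, each contributing the product of the weights along its edges, which equals $d$ raised to the number of antipodal edges in the cycle, since adjacencies carry weight $1$ and antipodal jumps carry weight $d$. Collecting these over all components, every permutation realizing $\mathcal{S}$ has weight product $d^{\,2a(\mathcal{S})+a_1(\mathcal{S})}$, and the number of such permutations is $2^{p_1(\mathcal{S})}$ (one choice per $2$-cycle, two per longer cycle). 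Using $\operatorname{sgn}(\sigma)=(-1)^{\,n-c(\sigma)}$ with cycle count $c(\sigma)=p(\mathcal{S})+p_1(\mathcal{S})$, every such $\sigma$ carries the same sign $(-1)^{\,n-p(\mathcal{S})-p_1(\mathcal{S})}$. Pooling the $2^{p_1(\mathcal{S})}$ equal contributions for each $\mathcal{S}$ and summing over all spanning adjacency-diametrical partitions yields exactly~\eqref{e5.1}.

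I expect the main obstacle to be the careful bookkeeping of the orientation multiplicities together with the weights. One must correctly account for the fact that a $2$-cycle is traversed in both directions by a single permutation, producing the square $AD(G)_{ij}^{\,2}$ and hence the exponent $2a(\mathcal{S})$, whereas a cycle of length $\geq 3$ is realized by two distinct permutations that each traverse each of its edges once, producing both the factor $2^{p_1(\mathcal{S})}$ and the exponent $a_1(\mathcal{S})$. The crux is verifying that all permutations attached to a fixed $\mathcal{S}$ share a common sign and a common weight product, so that they may legitimately be merged into a single summand: the sign statement follows from the cycle-count formula for $\operatorname{sgn}$, and the weight statement follows from the symmetry $AD(G)^{T}=AD(G)$.
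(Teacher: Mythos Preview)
Your proposal is correct and follows essentially the same approach as the paper: both expand $\det(AD(G))=\det(A(\mathcal{G}_G))$ via the Leibniz formula, identify the surviving permutations with spanning elementary subgraphs of $\mathcal{G}_G$ (equivalently, via Corollary~\ref{one-one2}, with spanning adjacency-diametrical partitions), and then tally the sign $(-1)^{n-p(\mathcal S)-p_1(\mathcal S)}$, the orientation multiplicity $2^{p_1(\mathcal S)}$, and the weight product $d^{2a(\mathcal S)+a_1(\mathcal S)}$ exactly as you describe. The only cosmetic difference is that the paper first phrases everything in terms of elementary subgraphs $\mathcal{H}$ of $\mathcal{G}_G$ and translates to $\mathcal{S}$ at the end, whereas you pass to $\mathcal{S}$ immediately.
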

	\begin{proof}
		We consider the associated weighted graph $\mathcal{G}_G$ of $G$. Let $A(\mathcal{G}_G)=(d_{ij})$. We have
		\begin{equation}\label{e5.2}
			\det (A(\mathcal{G}_G))=\overset{}{\underset{\pi}{\sum}} sgn(\pi) d_{1\pi(1)}\dots d_{n\pi(n)},
		\end{equation}
		where the summation is overall the permutations $\pi$ on $\{1, 2\dots,n\}$. Consider a nonzero term
		\begin{center}
			$d_{1\pi(1)}\dots d_{n\pi(n)}$.
		\end{center}
		Since $\pi$ admits a cycle decomposition, such a term will correspond to some 2 cycle $(ij)$ of $\pi$ which designate either an edge joining $i$ and $j$ with weight 1 or $d$, as well as some cycles of higher order, which correspond to cycles of $\mathcal{G}_G$. (Note that $\pi(i)\neq i$ for any $i$.) Thus, each nonzero term in the summation arises from an elementary weighted subgraph of $\mathcal{G}_G$. Suppose the term $d_{1\pi(1)}\dots d_{n\pi(n)}$ corresponds to the elementary subgraph $\mathcal{H}$ of $\mathcal{G}_G$. Then the sign of $\pi$ is given by,
		\begin{center}
			$sgn(\pi)=(-1)^{n-\text{number of cycles in }\pi}$,
		\end{center} 
		which equals $(-1)^{n-c(\mathcal{H})-c_1(\mathcal{H})}$, where $c(\mathcal{H})$ and $c_1(\mathcal{H})$ denote the number of edge components and number of cycle components in $\mathcal{H}$, respectively. Each spanning elementary subgraph $\mathcal{H}$  contributes $2^{c_{1}(\mathcal{H})}$ terms in the summation, since each  cycle of $\mathcal{G}_G$ can be associated to a cyclic permutation in two distinct ways. In $\mathcal{H}$, some edge component may have weight $d$, which contributes $d^{2t(\mathcal{H})}$ to the summation in~\eqref{e5.2}, where $t(\mathcal{H})$ denote the number of edge components of $\mathcal{H}$ having weight $d$. Also, some cycle components of $\mathcal{H}$ may contain edges with weight $d$, which contributes $d^{t_{1}(\mathcal{H})}$ to the summation in~\eqref{e5.2}, where $t_{1}(\mathcal{H})$ denote the number of edges having weight $d$ in all the  cycle components of $\mathcal{H}$. Hence, combining these observations, \eqref{e5.2} becomes
		
		\begin{equation}\label{H1}
			\det (A(\mathcal{G}_G)) = \sum (-1)^{n-c(\mathcal{H})-c_1(\mathcal{H})} 2^{c_{1}(\mathcal{H})} d^{2 t(\mathcal{H})+t_1(\mathcal{H})},
		\end{equation}
		where the summation is over all the spanning elementary  subgraphs $\mathcal{H}$ of $\mathcal{G}_G$. In view of Corollary~\ref{one-one2},  each spanning elementary  subgraph $\mathcal{H}$ of $\mathcal{G}_G$ corresponds to a spanning adjacency-diametrical partitions $\mathcal S$ of $G$. Moreover, it is evident that $c(\mathcal{H})=p(\mathcal S)$, $c_1(\mathcal{H})=p_1(\mathcal S)$, $t(\mathcal{H})=a(\mathcal S)$, and $t_1(\mathcal{H})=a_1(\mathcal S)$. Since $AD(G) =A(\mathcal{G}_G)$, applying these information in~\eqref{H1}, we get the desired result.
		\end{proof}

	\begin{example}\normalfont
		 For the star graph $S_4$ with central vertex 1 and pendent vertices  2, 3, 4, the spanning adjacency-diametrical partitions of    are $\mathcal S_1=\{\{1, 2, 3, 4\}\}$, $\mathcal S_2=\{\{1,2,4,3\}\}$, $\mathcal S_3=\{\{1,3,2,4\}\}$, $ \mathcal S_4=\{\{1,2\}, \{3,4\}\}$, $\mathcal S_5=\{\{1,3\},\{2,4\}\}$, and $\mathcal S_6=\{\{1,4\},\{2,3\}\}$.		
		Note that		
		$p(\mathcal S_i)=a(\mathcal S_i)=p_1(\mathcal S_j)=a_1(\mathcal S_j)=0$ for $i=1,2,3$ and $j=4,5,6$;		
		$p(\mathcal S_i)=p_1(\mathcal S_j)=a(\mathcal S_j)=1$ for $i=1,2,3$ and $j=4,5,6$;		
		$a_1(\mathcal S_i)=2$ for $i=1,2,3$.		
		Applying these values in~\eqref{e5.1}, we get
		\[\det(AD(S_4))= 3\,(-1)^3\,2^1\,2^2 + 3\,(-1)^2\,2^0\,2^{2(1)}=-12.\]
		
		\noindent This coincide with the direct computation of the determinant of $AD(G)$.
	\end{example} 

	\begin{example}\normalfont
		Consider the path graph $P_n$ on $n$ $(\geq 2)$ vertices. We show that
		\[
		\det(AD(P_n))=
		\begin{cases}
			2(n-1), & \text{if}~ n \text{ is odd};\\
			-1, & \text{if}~ n=2;\\
			(n-2)^2, & \text{if}~n=4k \text{ for } k=1,2,\dots;\\
			-n^2, & \text{if}~n=4k+2 \text{ for } k=1,2,\dots \end{cases} 
		\]		
		
		Clearly the diameter of $P_n$ is $n-1$. We assume that $V(P_n)=\{1,2,\ldots, n\}$ and $E(P_n)=\{\{i, i+1\} : i=1,2,\ldots, n-1\}$. We proceed by considering three cases.
		
		\noindent \textit{Case 1.}
		Suppose $n$ is odd.
		
		In this case $P_n$ has $\mathcal S=\{\{1,2,\dots,n\}\}$ as the unique spanning adjacency-diametrical partition, and  $p(\mathcal S)=a(\mathcal S)=0$ and  $p_1(\mathcal S)=a_1(\mathcal S)=1$, Applying these in~\eqref{e5.1}, we get
		\begin{center}
			$\det(AD(P_n))= (-1)^{n-1}\,2^1\,(n-1)^1=2(n-1)$.
		\end{center}

		\noindent \textit{Case 2.}
		Suppose $n$ is even. 
		
	Let $n=2$. The only spanning adjacency-diametrical partition of $P_2$ is $\mathcal S=\{\{1,2\}\}$, and $p(\mathcal S)=a(\mathcal S)=1$ and $p_1(\mathcal S)=a_1(\mathcal S)=0$. Then from~\eqref{e5.1}, we get
		\begin{center}
			$\det(AD(P_2))= (-1)^{2-1}\,2^0\,1^2=-1$.
		\end{center} 
	
		Let $n=4k$ or $n=4k+2$, where $k=1,2,\dots$. In either case,  the spanning adjacency-diametrical partitions for $P_n$ are the following:		
		$\mathcal S_1=\{\{1,2,\dots,n\}\}$, $\mathcal S_2=\{\{1,2\}, \{3,4\}, \dots, \{n-1, n\}\}$ and $\mathcal S_3=\{\{1,n\}, \{2,3\},\dots,\{n-2,n-1\}\}$. Here $p(\mathcal S_1)=a(\mathcal S_1)=p_1(\mathcal S_2)=a(\mathcal S_2)=a_1(\mathcal S_2)=p_1(\mathcal S_3)=a(\mathcal S_3)=0$, $p_1(\mathcal S_1)=a_1(\mathcal S_1)=a_1(\mathcal S_3)=1$ and $p(\mathcal S_2)=p(\mathcal S_3)=\frac{n}{2}$.
				
		When $n=4k$ for $k=1,2,\dots$, applying these in~\eqref{e5.1}, we obtain
		\begin{center}
			$\det(AD(P_n))= (-1)^{n-1}\,2^1\,(n-1)^1 + (-1)^{n-\frac{n}{2}}+(-1)^{n-\frac{n}{2}}\,(n-1)^2 =(n-2)^2$.
		\end{center}        
	
		Similarly, when $n=4k+2$ for $k=1,2,\dots$, from~\eqref{e5.1}, we get
	     \begin{center}
	     	$\det(AD(P_n))= (-1)^{n-1}\,2^1\,(n-1)^1 + (-1)^{n-\frac{n}{2}}+(-1)^{n-\frac{n}{2}}\,(n-1)^2 =-n^2$.
	     \end{center}                   
	\end{example}
	
		\begin{thm}\label{thm 5.3}
		Let $G$ be a connected graph with $n$ vertices and  diameter $d$. Let
		\begin{equation*}
			\Phi_{AD(G)}(\lambda)=\det(\lambda I-AD(G))=\lambda^{n}+c_{1}\lambda^{n-1}+\dots+c_n
		\end{equation*}
		be the characteristic polynomial of $AD(G)$. Then
		\begin{equation}\label{e5.3}
			c_k=\sum (-1)^{p(\mathcal S)+p_1(\mathcal S)} 2^{p_{1}(\mathcal S)} d^{2 a(\mathcal S)+a_1(\mathcal S)},
		\end{equation} 
		where the summation is over all the adjacency-diametrical partitions $\mathcal S$ of $G$ with $k$ vertices, for each $k=1,2,\dots,n$. 
	\end{thm}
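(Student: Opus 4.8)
The plan is to reduce the computation of each coefficient $c_k$ to the determinant analysis already carried out in Theorem~\ref{thm 5.1}, but applied to principal submatrices rather than to the whole matrix. The starting point is the standard identity relating the coefficients of a characteristic polynomial to its principal minors: for any $n\times n$ matrix $M$ with $\det(\lambda I-M)=\lambda^{n}+c_1\lambda^{n-1}+\cdots+c_n$, one has
\[
c_k=(-1)^{k}\sum_{\substack{S\subseteq V(G)\\ |S|=k}}\det\bigl(M[S]\bigr),
\]
where $M[S]$ is the principal submatrix of $M$ indexed by $S$. I would first record this identity and then set $M=AD(G)=A(\mathcal{G}_G)$, observing that $AD(G)[S]=A(\mathcal{G}_G[S])$ is exactly the adjacency matrix of the induced weighted subgraph $\mathcal{G}_G[S]$ on the $k$ vertices of $S$.

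Next I would apply the permutation-expansion argument of Theorem~\ref{thm 5.1} verbatim to each $\det\bigl(A(\mathcal{G}_G[S])\bigr)$. Since the diagonal entries vanish, only permutations of $S$ with no fixed points contribute; their cycle decompositions produce matchings and cycles, so every nonzero Leibniz term arises from a spanning elementary subgraph $\mathcal{H}$ of $\mathcal{G}_G[S]$, with each cycle component of length $\ge 3$ contributing a factor $2$ (its two orientations) and the $d$-weighted edges contributing the appropriate powers of $d$. This yields
\[
\det\bigl(A(\mathcal{G}_G[S])\bigr)=\sum_{\mathcal{H}}(-1)^{\,k-c(\mathcal{H})-c_1(\mathcal{H})}\,2^{\,c_1(\mathcal{H})}\,d^{\,2t(\mathcal{H})+t_1(\mathcal{H})},
\]
the sum being over spanning elementary subgraphs $\mathcal{H}$ of $\mathcal{G}_G[S]$, with $c,c_1,t,t_1$ as in Theorem~\ref{thm 5.1}. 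Inserting this into the identity for $c_k$ and absorbing the external $(-1)^{k}$ turns the exponent $k-c(\mathcal{H})-c_1(\mathcal{H})$ into $2k-c(\mathcal{H})-c_1(\mathcal{H})$, whose parity equals that of $c(\mathcal{H})+c_1(\mathcal{H})$; this is precisely the sign shift that replaces the $(-1)^{n-p(\mathcal S)-p_1(\mathcal S)}$ of Theorem~\ref{thm 5.1} by $(-1)^{p(\mathcal S)+p_1(\mathcal S)}$.

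Finally I would collapse the double sum. Letting $S$ range over all $k$-subsets while $\mathcal{H}$ ranges over the spanning elementary subgraphs of each $\mathcal{G}_G[S]$ is the same as letting $\mathcal{H}$ range over all elementary subgraphs of $\mathcal{G}_G$ covering exactly $k$ vertices, since such an $\mathcal{H}$ determines its vertex set uniquely. By Corollary~\ref{one-one2} these are in bijection with the adjacency-diametrical partitions $\mathcal S$ of $G$ on $k$ vertices, under which $c(\mathcal{H})=p(\mathcal S)$, $c_1(\mathcal{H})=p_1(\mathcal S)$, $t(\mathcal{H})=a(\mathcal S)$ and $t_1(\mathcal{H})=a_1(\mathcal S)$; substituting gives the claimed formula~\eqref{e5.3}. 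The only delicate point is the sign bookkeeping, so I expect the main obstacle to be verifying the parity shift rather than any new combinatorial content. As a consistency check I would test the endpoint $k=n$: there $c_n=(-1)^{n}\det(AD(G))$, and the flip $(-1)^{n-p(\mathcal S)-p_1(\mathcal S)}\mapsto(-1)^{p(\mathcal S)+p_1(\mathcal S)}$ agrees exactly with multiplying the determinant formula~\eqref{e5.1} by $(-1)^{n}$, which confirms the signs are correct.
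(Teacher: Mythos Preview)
Your proposal is correct and follows essentially the same approach as the paper: express $(-1)^k c_k$ as the sum of $k\times k$ principal minors of $AD(G)$, apply the permutation-expansion argument of Theorem~\ref{thm 5.1} to each minor, and simplify the sign. You are in fact more careful than the paper in noting explicitly that $AD(G)[S]=A(\mathcal{G}_G[S])$ (rather than the AD matrix of $G[S]$, which it is not) and in justifying via Corollary~\ref{one-one2} why the double sum collapses to a single sum over adjacency-diametrical partitions on $k$ vertices.
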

	\begin{proof}
		Note that the number $(-1)^k c_k$ is the sum of all $k \times k$ principal minors of $AD(G)$, $k=1,2,\dots,n$. By Theorem~\ref{thm 5.1},
		\begin{equation*}
			c_k=(-1)^{k}\sum (-1)^{k-p(\mathcal S)-p_1(\mathcal S)} 2^{p_{1}(\mathcal S)} d^{2 a(\mathcal S)+ a_1(\mathcal S)},
		\end{equation*}
		where the summation is over all the adjacency-diametrical partitions $\mathcal S$ of $G$ with $k$ vertices, for each $k=1,2,\dots,n$. Hence, $c_k=\sum (-1)^{p(\mathcal S)+p_1(\mathcal S)} 2^{p_{1}(\mathcal S)} d^{2 a(\mathcal S)+a_1(\mathcal S)}$. Note that $c_{1}=0$.
	\end{proof}
	
	\begin{note}\normalfont
		We now obtain the explicit expressions of $c_2$ and $c_3$ by applying Theorem~\ref{thm 5.3}.

		(i) Adjacency-diametrical partitions  of $G$ with two vertices is of the form either $\mathcal S_1=\{\{v_i,v_j\}\}$, where $v_i \sim v_j$ or $\mathcal S_2=\{\{v_i,v_j\}\}$, where $d_G(v_i,v_j)=d$.
		
		For both types we have $p(\mathcal S_1)=p(\mathcal S_2)=a(\mathcal S_2)=1$, $p_1(\mathcal S_1)=p_1(\mathcal S_2)=a(\mathcal S_1)=a_1(\mathcal S_1)=a_1(\mathcal S_2)=0$. Substituting these values in~\eqref{e5.3}, we get
		
		\[c_2={\sum} (-1)^{1}\,2^{0}\, d^{0}+{\sum} (-1)^{1} \,2^{0}\, d^{2(1)},\]
		where the first and second sums range over all adjacency–diametrical partitions of the forms $\mathcal S_1$ and $\mathcal S_2$, respectively. Hence,		
		$$c_2=-(|E(G)|+kd^2),$$
		where $k$ denotes the number of unordered antipodal pairs of vertices in $G$.
		
		(ii) Adjacency-diametrical partitions  of $G$ with three vertices fall into the following four types: $\mathcal S_1=\{\{v_i,v_j,v_k\}\}$, where $v_i \sim v_j$, $v_j \sim v_k$ and $v_i \sim v_k$; $\mathcal S_2=\{\{v_i,v_j,v_k\}\}$, where $v_i \sim v_j$, $v_j \sim v_k$ and $d_G(v_i,v_k)=d$; $\mathcal S_3=\{\{v_i,v_j,v_k\}\}$, where $v_i \sim v_j$, $d_G(v_j,v_k)=d$ and $d_G(v_i,v_k)=d$; $\mathcal S_4=\{\{v_i,v_j,v_k\}\}$, where $d_G(v_i,v_j)=d$, $d_G(v_j,v_k)=d$ and $d_G(v_i,v_k)=d$.
		
	For $i=1,2,3$, we have $p(\mathcal S_i)=a(\mathcal S_i)=a_1(\mathcal S_1)=0$, $p_1(\mathcal S_i)=a_1(\mathcal S_2)=1$ with $a_1(\mathcal S_3)=2$ and $a_1(\mathcal S_4)=3$. Substituting these in~\eqref{e5.3}, gives
		\[
		c_3=
			\sum (-1)^{1}\,2^{1}\,d^{0}
			+\sum (-1)^{1}\,2^{1}\,d^{1}
			+\sum (-1)^{1}\,2^{1}\,d^{2}
			+\sum (-1)^{1}\,2^{1}\,d^{3}\\
		\]
			where the four sums correspond to  adjacency-diametrical partitions  of $G$ of the form $\mathcal S_1$, $\mathcal S_2$, $\mathcal S_3$, and $\mathcal S_4$, respectively. Therefore,
		$$c_3=	 -2(k_0+k_1 d+k_2 d^{2}+k_3 d^{3}),$$			
		where for $s\in\{0,1,2,3\}$, $k_s$ is the number of number of unordered triples $\{v_i,v_j,v_k\}$ that have exactly $s$ antipodal pairs and the other 
		$3-s$ pairs at distance 1. Note that $k_0$ is the number of triangles in $G$.
	\end{note}

	Note that many results on the spectra of adjacency matrices of unweighted graphs in spectral graph theory extend directly to weighted graphs without any modification. In the forthcoming results, we will use these facts without explicitly stating this extension each time.

	We obtain the next result as a direct consequence of \cite[Corollary 3.11]{bapat2010graphs}, using the fact that $AD(G) =A(\mathcal{G}_G)$, and that adjacency-diametrical cycles of $G$ can be viewed as cycles in $\mathcal{G}_G$ (cf. Lemma \ref{one-one-cycle}). 	
	\begin{cor}\label{cor 5.1}
		Let $G$ be a connected graph on $n$ vertices. Let
		\begin{equation*}
			\Phi_{AD(G)}(\lambda)=\det(\lambda I-AD(G))=\lambda^{n}+c_{1}\lambda^{n-1}+\dots+c_n
		\end{equation*}
		be the characteristic polynomial of $AD(G)$. Suppose $c_{3}=c_{5}=\dots=c_{2k-1}=0$. Then $G$ has no odd adjacency-diametrical cycle of length $i, 3\leq i \leq 2k-1$. Furthermore, the number of adjacency-diametrical  cycles of length $2k+1$ in $G$ is $-\frac{1}{2} c_{2k+1}$. 
	\end{cor}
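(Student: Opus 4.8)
The plan is to transfer everything to the associated weighted graph $\mathcal G_G$ and then read the coefficients $c_i$ off a Sachs-type expansion. Since $AD(G)=A(\mathcal G_G)$, the two characteristic polynomials coincide, so the $c_i$ are precisely the quantities computed in Theorem~\ref{thm 5.3}; equivalently one may invoke the weighted extension of \cite[Corollary 3.11]{bapat2010graphs}. Throughout I would use Lemma~\ref{one-one-cycle} to pass between odd cycles of $\mathcal G_G$ and odd adjacency–diametrical cycles of $G$, recalling that this correspondence preserves length and hence parity. The engine of both assertions is the same elementary observation: in any adjacency–diametrical partition $\mathcal S$, each size-$2$ part covers two vertices and each even cycle covers an even number of vertices, so a partition covering an \emph{odd} number of vertices must contain at least one part that is an odd adjacency–diametrical cycle.

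For the first assertion I would argue by a minimal counterexample. Suppose $G$ has an odd adjacency–diametrical cycle of length at most $2k-1$, and let $\ell$ be the smallest such length. Then any adjacency–diametrical partition on exactly $\ell$ vertices must contain an odd cycle, and by minimality of $\ell$ that odd cycle already has length $\ge \ell$; since the whole partition has only $\ell$ vertices, it must consist of a single odd cycle of length exactly $\ell$ with no other parts. By Theorem~\ref{thm 5.3} each such cycle contributes $(-1)^{0+1}2^{1}d^{a_1}=-2d^{a_1}$ to $c_\ell$, where $a_1$ is its number of antipodal (weight-$d$) edges. All these contributions have the same sign and $d\ge 1$, so no cancellation occurs and $c_\ell\neq 0$, contradicting the hypothesis $c_\ell=0$. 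Hence $G$ has no odd adjacency–diametrical cycle of length $i$ with $3\le i\le 2k-1$.

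For the second assertion I would apply the same counting at index $2k+1$. Having ruled out odd adjacency–diametrical cycles of length $\le 2k-1$, the only partitions on $2k+1$ vertices are single odd cycles of length exactly $2k+1$, each contributing $-2d^{a_1}$, so that $-\tfrac12 c_{2k+1}=\sum d^{a_1}$ over all such cycles; in the unweighted reading that underlies \cite[Corollary 3.11]{bapat2010graphs} every $a_1$ is suppressed and this becomes the number of cycles of length $2k+1$. The main obstacle is precisely this bookkeeping: one must verify that parity together with minimality forces each contributing partition to be a \emph{single} odd cycle with no extraneous edge- or cycle-parts, and that the resulting terms share a sign and therefore cannot cancel—this is exactly what turns ``$c_\ell=0$'' into ``no odd adjacency–diametrical cycle of length $\ell$,'' and it is also the point at which the presence of the weight factors $d^{a_1}$ must be reconciled with the classical unweighted count.
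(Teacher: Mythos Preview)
Your approach is essentially the paper's: the paper proves this in one line by citing \cite[Corollary 3.11]{bapat2010graphs}, together with $AD(G)=A(\mathcal G_G)$ and Lemma~\ref{one-one-cycle}, and you are simply unpacking that citation via the Sachs-type expansion of Theorem~\ref{thm 5.3}. Your minimal-counterexample argument for the first assertion is exactly how one proves the classical Corollary 3.11, so the two routes coincide.

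You have, however, put your finger on a point the paper's one-line proof does not address: in the weighted setting your computation gives $-\tfrac12 c_{2k+1}=\sum d^{a_1(\mathcal S)}$, summed over all adjacency--diametrical $(2k{+}1)$-cycles, not the unweighted \emph{number} of such cycles. The statement as written matches the classical unweighted conclusion, while the matrix $AD(G)$ is genuinely weighted whenever $d\ge 2$. Your caveat about ``reconciling the weight factors $d^{a_1}$ with the classical unweighted count'' is therefore not just bookkeeping---it is a real discrepancy that the paper's citation glosses over, and your derivation makes it visible rather than hiding it.
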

	
\subsection{Diametrical bipartite graphs}
	Now we introduce a  special class of bipartite graphs.
	\begin{defn}\normalfont
		A connected graph $G$ is said to be \textit{diametrical bipartite} if it is bipartite with bipartition $(V_1, V_2)$ and no two vertices in the same part are antipodal.
	\end{defn}
	
	\begin{lemma}\label{odddiam}
		A connected bipartite graph is diametrical bipartite if and only if its diameter $d$ is odd.
	\end{lemma}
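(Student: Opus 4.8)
The plan is to reduce the statement to the elementary parity fact about distances in a connected bipartite graph. Since $G$ is connected, its bipartition $(V_1,V_2)$ is unique up to swapping the two parts, so every vertex has a well-defined part. First I would record the key observation that for a connected bipartite graph $G$ with bipartition $(V_1,V_2)$ and any two vertices $u,v$, the distance $d_G(u,v)$ is \emph{even} when $u$ and $v$ lie in the same part and \emph{odd} when they lie in different parts. This is because every walk from $u$ to $v$ crosses between $V_1$ and $V_2$ at each edge, so its length has fixed parity determined solely by whether $u$ and $v$ are in the same part; in particular the shortest such walk, whose length is $d_G(u,v)$, inherits that parity.

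With this in hand the forward direction is immediate. Suppose $G$ is diametrical bipartite. The diameter $d$ is attained, so there exists an antipodal pair $u,v$ with $d_G(u,v)=d$. By hypothesis no two vertices in the same part are antipodal, so $u$ and $v$ must lie in different parts. By the parity observation $d_G(u,v)$ is then odd, i.e.\ $d$ is odd.

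For the converse, assume $d$ is odd. Let $u,v$ be any two vertices lying in the same part of the bipartition. By the parity observation $d_G(u,v)$ is even, and since $d$ is odd we have $d_G(u,v)\neq d$; hence $u$ and $v$ are not antipodal. As $u,v$ were an arbitrary same-part pair, no two vertices in the same part are antipodal, so $G$ is diametrical bipartite. Combining the two directions completes the proof.

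I do not expect a genuine obstacle here: the entire argument rests on the parity lemma, which is standard. The only points requiring minor care are invoking that a connected bipartite graph has an essentially unique bipartition (so that ``same part'' is meaningful) and using the fact that the diameter is actually realized by some antipodal pair to run the forward direction.
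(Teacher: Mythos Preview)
Your proof is correct and follows essentially the same approach as the paper's: both rest on the parity observation that in a connected bipartite graph two vertices lie in the same part if and only if their distance is even. The only cosmetic difference is that the paper handles the forward direction by contrapositive (if $d$ is even, any antipodal pair lies in the same part) whereas you argue directly (an antipodal pair must lie in different parts, forcing $d$ odd); the converse direction is identical in both.
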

	
	\begin{proof}
		Let $G$ be a connected bipartite graph with bipartition $(V_1, V_2)$ and diameter $d$.
		Since $G$ is bipartite, every pair of vertices lying in the same part $V_i$ has even distance, because any path in a bipartite graph alternates between the two parts.
		
		If $d$ is odd, then no two vertices in the same part can be at distance $d$, and therefore $G$ is diametrical bipartite.
		
		If $d$ is even, then any pair of vertices $x,y$ with $d_G(x,y)=d$ must lie in the same bipartition class (as they are at even distance). Hence some two vertices in the same part are antipodal, so $G$ is not diametrical bipartite.
	\end{proof}

	\begin{thm}\label{thm 5.2}\label{diam bipartite}
		Let $G$ be a connected graph with at least three vertices. Then the following are equivalent.		
		\begin{enumerate}[\normalfont (i)]
			\item $G$ is diametrical bipartite
			\item $\mathcal{G}_G$ is bipartite
			\item All the adjacency-diametrical cycles in $G$ are of even length
		\end{enumerate}  
		
	\end{thm}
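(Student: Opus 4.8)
The plan is to prove the three-way equivalence by establishing (i)$\Leftrightarrow$(ii) through a direct comparison of bipartitions, and then (ii)$\Leftrightarrow$(iii) through the classical characterization of bipartite graphs combined with the length-preserving cycle correspondence of Lemma~\ref{one-one-cycle}. The unifying idea is that the associated weighted graph $\mathcal{G}_G$ encodes both the adjacency relation and the antipodal relation of $G$ as its (weight-$1$ and weight-$d$) edges, so structural properties of $G$ of interest here translate verbatim into structural properties of $\mathcal{G}_G$.

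For (i)$\Rightarrow$(ii), I would begin from a bipartition $(V_1,V_2)$ witnessing that $G$ is diametrical bipartite and show that the very same partition makes $\mathcal{G}_G$ bipartite. Every edge of $\mathcal{G}_G$ is either a weight-$1$ edge recording an adjacency of $G$ or a weight-$d$ edge recording an antipodal pair. The adjacency edges cross between $V_1$ and $V_2$ because $G$ is bipartite, while the antipodal edges cross between the parts precisely because no two vertices of the same part are antipodal. Hence no edge of $\mathcal{G}_G$ lies within a part, so $\mathcal{G}_G$ is bipartite. Conversely, for (ii)$\Rightarrow$(i) I would take a bipartition $(W_1,W_2)$ of $\mathcal{G}_G$. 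Since the adjacency edges of $G$ form a subset of the edge set of $\mathcal{G}_G$, the same partition exhibits $G$ as bipartite, and connectedness of $G$ makes this its unique bipartition. Moreover, every antipodal pair is an edge of $\mathcal{G}_G$ and therefore crosses between $W_1$ and $W_2$, so no two vertices in the same part are antipodal; thus $G$ is diametrical bipartite.

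For (ii)$\Leftrightarrow$(iii), I would invoke the fact that a graph is bipartite if and only if it contains no cycle of odd length, a criterion that is purely structural and so applies to the weighted graph $\mathcal{G}_G$ irrespective of its weights $1$ and $d$. Thus $\mathcal{G}_G$ is bipartite exactly when every cycle of $\mathcal{G}_G$ has even length. By Lemma~\ref{one-one-cycle} the cycles of $\mathcal{G}_G$ are in a length-preserving bijection with the adjacency-diametrical cycles of $G$, so $\mathcal{G}_G$ has no odd cycle if and only if $G$ has no odd adjacency-diametrical cycle, i.e. all adjacency-diametrical cycles of $G$ are of even length.

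The argument is essentially bookkeeping once $\mathcal{G}_G$ is in hand, so there is no deep obstacle; the only points requiring care are that bipartiteness of $\mathcal{G}_G$ must be read as a property of its underlying unweighted structure, and that the bipartition of the connected graph $G$ is forced, so that the partitions appearing in (i) and (ii) can legitimately be identified with one another.
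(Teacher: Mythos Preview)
Your proof is correct. The argument for (ii)$\Leftrightarrow$(iii) is identical to the paper's: both invoke Lemma~\ref{one-one-cycle} together with the odd-cycle characterization of bipartiteness. For (i)$\Leftrightarrow$(ii) there is a minor but genuine difference: the paper detours through Lemma~\ref{odddiam}, first deducing that the diameter $d$ is odd and then using parity of distances in a bipartite graph to conclude that antipodal pairs must cross the bipartition; you instead read the required fact (no antipodal pair lies in a single part) straight off the definition of ``diametrical bipartite'' and transport the bipartition verbatim to $\mathcal{G}_G$. Your route is the more economical one---it avoids the auxiliary lemma entirely---while the paper's route has the side benefit of tying the notion to the concrete numerical condition ``$d$ odd.'' Either way the content is the same short bookkeeping, and your remarks about uniqueness of the bipartition of a connected bipartite graph and about reading bipartiteness of $\mathcal{G}_G$ at the level of the underlying unweighted graph are exactly the right points of care.
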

	
	\begin{proof}
		
		Let the diameter of $G$ be $d$.	The weighted graph $\mathcal{G}_G$ is obtained from $G$ by taking a copy of $G$ and assigning unit weight to their edges, and adding  joining each pairs of antipodal vertices in $G$ by a new edge having weight $d$. 
		
		(i) $\Leftrightarrow$ (ii): Suppose $G$ is diametrical bipartite. By Lemma~\ref{odddiam}, $d$ is odd.  Consequently, any pair of antipodal vertices cannot both occur in the same part of the bipartition of $G$. So end vertices of the newly added edge with weight $d$ must lie in two different parts. Thus $\mathcal{G}_G$ is bipartite. By retracing the above argument, we get the proof of converse part.
		
		(ii) $\Leftrightarrow$ (iii): In view of Lemma~\ref{one-one-cycle}, 
		$\mathcal G$ is bipartite (every cycle in $\mathcal G$ has even length) if and only if every adjacency–diametrical cycle in $G$ has even length.
	\end{proof}

Since $AD(G) =A(\mathcal{G}_G)$, the next result follows directly from Theorem \ref{diam bipartite} together with \cite[Theorem 3.14]{bapat2010graphs}.
	\begin{thm}\label{thm 5.4}
		Let $G$ be a connected graph on  $n$ vertices. Then the following conditions are equivalent.
		\begin{enumerate}[\normalfont (i)]
			\item $G$ is diametrical bipartite; 
			\item If $ \Phi_{AD(G)}(\lambda)= det(AD(G)-\lambda I)=\lambda^n + c_1\lambda^{n-1}+ \dots +c_n$ is the characteristic polynomial of $AD(G)$, then
			$c_{2k+1} = 0, k = 0,1,\dots$;
			\item The eigenvalues of $AD(G)$ are symmetric with respect to the origin.
		\end{enumerate}
	\end{thm}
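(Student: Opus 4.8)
The plan is to transport all three conditions to the associated weighted graph $\mathcal{G}_G$ and then apply the classical spectral characterization of bipartiteness. The key identity is $AD(G)=A(\mathcal{G}_G)$, which means that $\Phi_{AD(G)}(\lambda)$ and $\mbox{Spec}(AD(G))$ are precisely the characteristic polynomial and the spectrum of the (weighted) adjacency matrix of $\mathcal{G}_G$. So the whole theorem becomes a statement about the single weighted graph $\mathcal{G}_G$.

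First I would use Theorem~\ref{diam bipartite} to replace condition (i) by the statement that $\mathcal{G}_G$ is bipartite: that theorem already gives $G$ diametrical bipartite $\iff$ $\mathcal{G}_G$ bipartite $\iff$ all adjacency-diametrical cycles of $G$ are even. Then I would invoke \cite[Theorem 3.14]{bapat2010graphs}, read off for the matrix $A(\mathcal{G}_G)$, which asserts that for an adjacency graph the bipartiteness is equivalent both to the vanishing of all odd-indexed coefficients $c_{2k+1}$ of its characteristic polynomial and to the symmetry of its spectrum about the origin. Chaining these equivalences yields (i)~$\Leftrightarrow$~(ii) and (i)~$\Leftrightarrow$~(iii) simultaneously, completing the proof.

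As a self-contained check of the implication (i)~$\Rightarrow$~(ii) I would also argue directly through Theorem~\ref{thm 5.3}: if $G$ is diametrical bipartite then by Theorem~\ref{diam bipartite} every adjacency-diametrical cycle is even, so in any adjacency-diametrical partition $\mathcal S$ each part is either an edge (size $2$) or an even adjacency-diametrical cycle, hence every part has even size. Consequently no adjacency-diametrical partition can cover an odd number $2k+1$ of vertices, and since $c_{2k+1}$ is a sum over exactly such partitions (Theorem~\ref{thm 5.3}), the sum is empty and $c_{2k+1}=0$. This also dovetails with Corollary~\ref{cor 5.1}, under which $c_{2k+1}=0$ reflects the absence of odd adjacency-diametrical cycles.

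I expect the only genuine obstacle to be justifying that the cited bipartite spectral theorem, stated for unweighted graphs, applies verbatim to the weighted graph $\mathcal{G}_G$; this is exactly the weighted extension flagged in the remark preceding Corollary~\ref{cor 5.1}. The symmetry direction is robust under weights, since a bipartite $A(\mathcal{G}_G)$ has block-antidiagonal form and the involution negating one block's coordinates sends each eigenpair $(\lambda,X)$ to $(-\lambda,X')$ regardless of the edge weights; the coefficient direction follows from the coefficient formula together with the cycle-parity correspondence of Lemma~\ref{one-one-cycle}. Noting these two points secures the weighted version and makes the chaining above rigorous.
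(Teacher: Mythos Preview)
Your proposal is correct and follows exactly the paper's approach: the paper simply notes that $AD(G)=A(\mathcal{G}_G)$ and then says the result follows directly from Theorem~\ref{diam bipartite} together with \cite[Theorem~3.14]{bapat2010graphs}. Your additional remarks (the direct check via Theorem~\ref{thm 5.3} and the justification that the cited result extends to weighted graphs) go beyond what the paper writes out, but they only elaborate the same line of argument.
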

	
	\section{Some graph invariants and bounds}\label{s6}
	\begin{defn}\normalfont
		Let $G$ be a connected graph with diameter $d$. The \textit{diametrical degree} of a vertex $u$ in $G$, denoted by $\hat{d_G}(u)$ or simply by $\hat{d}(u)$, is the number of vertices $v$ in $G$ such that $d_G(u,v)=d$, i.e, 
		\[\hat{d}(u)=|\{v \in V(G):d_G(u,v)=d\}|.\]		
		We denote by $\hat{\Delta}(G)$ and $\hat{\delta}(G)$, the maximum and the minimum of the diametrical degrees of the vertices of $G$, respectively. 	
		The \textit{diametrical degree sum} of $G$, denoted by $\hat{d}(G)$, is the sum of all the diametrical degrees of the vertices of $G$, i.e.,
		\begin{equation*}
			\hat{d}(G)=\overset{}{\underset{v \in V(G)}{\sum}}\hat{d}(v).
		\end{equation*} 
	\end{defn}
	
	\begin{example}\normalfont
		The diametrical degrees of the vertices of the graph $G$ shown in Figure~\ref{graph1} are $\hat{d}(v_1)=0, \hat{d}(v_2)=0, \hat{d}(v_3)=1, \hat{d}(v_4)=0, \hat{d}(v_5)=0,$ and $\hat{d}(v_6)=1$. And the diametrical degree of $G$ is $\hat{d}(G)=0+0+1+0+0+1=2$. Moreover, $\hat{\delta}(G)=0$ and $\hat{\Delta}(G)=1$.
			\begin{figure}[H]
					\begin{center}
							\includegraphics[scale=1.0]{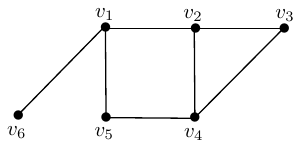}\caption{The graph $G$}\label{graph1}
						\end{center}
				\end{figure}		
	\end{example}
	
	\begin{lemma}\label{thm 6.1}
		Let $G$ be a connected graph with $n$ vertices, $m$ edges and diameter $d$. Let $\lambda_1 \ge \lambda_2 \ge \dots \ge \lambda_n$ be the eigenvalues of $AD(G)$. Then 
		\begin{enumerate}[\normalfont (i)]
			\item $\overset{n}{\underset{i=1}{\sum}}\lambda_{i}=0$;
			\item $\overset{n}{\underset{i=1}{\sum}} \lambda^2_{i}=2m+d^{2}\hat{d}(G)$.
		\end{enumerate}
	\end{lemma}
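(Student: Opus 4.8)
The plan is to realize both identities as traces of powers of the symmetric matrix $AD(G)$, exploiting the fact that the eigenvalues are real and that $\sum_{i=1}^n \lambda_i^{\,t} = \mbox{tr}\bigl(AD(G)^{t}\bigr)$ for each positive integer $t$. Part (i) then reduces to computing $\mbox{tr}\bigl(AD(G)\bigr)$. First I would observe that every diagonal entry of $AD(G)$ is zero: by definition the $(i,i)$-entry records the value associated with $d_G(v_i,v_i)=0$, which is neither $1$ nor $d$, and so falls into the ``$0$ otherwise'' case. Hence $\mbox{tr}\bigl(AD(G)\bigr)=0$, and therefore $\sum_{i=1}^n \lambda_i = 0$.

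For part (ii) I would use $\sum_{i=1}^n \lambda_i^2 = \mbox{tr}\bigl(AD(G)^2\bigr)$. Since $AD(G)$ is symmetric, its $i$-th diagonal entry under squaring is
\[
\bigl(AD(G)^2\bigr)_{ii} = \sum_{j=1}^n AD(G)_{ij}\,AD(G)_{ji} = \sum_{j=1}^n \bigl(AD(G)_{ij}\bigr)^2 .
\]
The next step is to classify each term $\bigl(AD(G)_{ij}\bigr)^2$ by the defining cases of the AD matrix: it equals $1$ when $d_G(v_i,v_j)=1$, equals $d^2$ when $d_G(v_i,v_j)=d$, and equals $0$ otherwise. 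Counting the two nonzero cases, the number of indices $j$ with $d_G(v_i,v_j)=1$ is exactly $\deg(v_i)$, while the number with $d_G(v_i,v_j)=d$ is exactly the diametrical degree $\hat{d}(v_i)$. This yields the clean formula $\bigl(AD(G)^2\bigr)_{ii} = \deg(v_i) + d^2\,\hat{d}(v_i)$.

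Finally I would sum this over all $i$ to obtain
\[
\sum_{i=1}^n \lambda_i^2 = \sum_{i=1}^n \deg(v_i) + d^2 \sum_{i=1}^n \hat{d}(v_i) = 2m + d^2\,\hat{d}(G),
\]
where the first sum is evaluated by the handshake lemma $\sum_{i=1}^n \deg(v_i) = 2m$ and the second by the definition of the diametrical degree sum $\hat{d}(G) = \sum_{v\in V(G)} \hat{d}(v)$. Honestly, there is no real obstacle here: the entire argument is a direct trace computation, and the only point requiring a moment's care is the bookkeeping that correctly matches the two nonzero entry-types of $AD(G)$ to the ordinary degree and the diametrical degree of each vertex.
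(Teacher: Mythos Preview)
Your proof is correct and follows essentially the same approach as the paper: both parts are handled by computing $\mbox{tr}(AD(G))$ and $\mbox{tr}(AD(G)^2)$, with the diagonal entries of the square identified as $d(v_i)+d^2\hat{d}(v_i)$ and then summed using the handshake lemma and the definition of $\hat{d}(G)$.
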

	\begin{proof}
		\begin{enumerate}[(i)]
			\item Since the trace of $AD(G)$ is $0$, statement (i) follows.
			\item Let $AD(G)=(d_{ij})$. Then 
			
			$\overset{n}{\underset{i=1}{\sum}} \lambda^2_{i} = tr(AD(G))^2
			=\overset{n}{\underset{i=1}{\sum}}\overset{n}{\underset{j=1}{\sum}}d_{ij}d_{ji}
			=\overset{n}{\underset{i=1}{\sum}}\left(d(v_{i})+d^2\hat{d}(v_i)\right)
			=2m+d^2\hat{d}(G) $
		\end{enumerate}
		and the proof is complete.
	\end{proof}

The largest eigenvalue of $AD(G)$ is said to be the \textit{adjacency-diametrical spectral radius} (or \textit{AD spectral radius}) of $G$.
	
	\begin{thm}
		Let $G$ be a connected graph with $n$ vertices, $m$ edges and diameter $d$. Let $\lambda_1 \ge \lambda_2 \ge \dots \ge \lambda_n$ be the eigenvalues of $AD(G)$. Then 
		\begin{equation*}
			\lambda_1 \le \sqrt{ \left(\frac{n-1}{n}\right) (2m+d^{2}\hat{d}(G))}.
		\end{equation*}
	\end{thm}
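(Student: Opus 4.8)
The plan is to bound $\lambda_1$ using the two trace identities from Lemma~\ref{thm 6.1} together with the standard variance-type inequality for eigenvalues. The key fact I would exploit is that $\sum_{i=1}^n \lambda_i = 0$ (part (i)) and $\sum_{i=1}^n \lambda_i^2 = 2m + d^2\hat{d}(G)$ (part (ii)). Writing $S_1 = \sum_i \lambda_i = 0$ and $S_2 = \sum_i \lambda_i^2$, the idea is to isolate $\lambda_1$ and apply the Cauchy--Schwarz inequality to the remaining $n-1$ eigenvalues.

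The main computation proceeds as follows. Since $\lambda_1 = -\sum_{i=2}^n \lambda_i$ by part (i), applying the Cauchy--Schwarz inequality to the vector $(\lambda_2,\dots,\lambda_n)$ against the all-ones vector of length $n-1$ gives
\[
\lambda_1^2 = \left(\sum_{i=2}^n \lambda_i\right)^2 \le (n-1)\sum_{i=2}^n \lambda_i^2 = (n-1)\bigl(S_2 - \lambda_1^2\bigr).
\]
Rearranging this yields $\lambda_1^2 \le (n-1)S_2 - (n-1)\lambda_1^2$, hence $n\,\lambda_1^2 \le (n-1)S_2$, so that
\[
\lambda_1 \le \sqrt{\frac{n-1}{n}\,S_2} = \sqrt{\frac{n-1}{n}\bigl(2m + d^2\hat{d}(G)\bigr)},
\]
which is exactly the claimed bound. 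The only subtlety is that Cauchy--Schwarz applied to $(\lambda_2,\dots,\lambda_n)$ gives $\bigl(\sum_{i=2}^n \lambda_i\bigr)^2 \le (n-1)\sum_{i=2}^n \lambda_i^2$, and one must use $\sum_{i=2}^n \lambda_i^2 = S_2 - \lambda_1^2$ to close the argument; taking the positive square root is justified because $\lambda_1 \ge 0$ (indeed $\lambda_1 \ge \frac{1}{n}\sum_i \lambda_i = 0$ as the largest eigenvalue dominates the average).

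I do not anticipate a genuine obstacle here, since the argument is the well-known Hong-type / Lagrange-multiplier bound relating the spectral radius of a zero-trace symmetric matrix to its second moment; it is entirely analogous to the classical bound $\lambda_1(A(G)) \le \sqrt{\frac{n-1}{n}\cdot 2m}$ for the adjacency matrix. The one point requiring care is the sign of $\lambda_1$ when taking the square root, which is handled by the observation that the largest eigenvalue of a real symmetric matrix is at least the average of all eigenvalues, and that average is $0$ by part (i). Everything else reduces to the two identities already established in Lemma~\ref{thm 6.1} and a single application of Cauchy--Schwarz.
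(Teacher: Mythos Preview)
Your proof is correct and follows essentially the same route as the paper: both use the trace identities from Lemma~\ref{thm 6.1} together with Cauchy--Schwarz on the $n-1$ eigenvalues $\lambda_2,\dots,\lambda_n$, then rearrange to isolate $\lambda_1^2$. Your version is marginally more streamlined in that you apply Cauchy--Schwarz directly to $\bigl(\sum_{i\ge 2}\lambda_i\bigr)^2$ rather than passing through $\lambda_1 \le \sum_{i\ge 2}|\lambda_i|$ first, and you explicitly justify $\lambda_1\ge 0$ before taking the square root, which the paper leaves implicit.
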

	\begin{proof}
		From Lemma~\ref{thm 6.1}(i),we have $\overset{n}{\underset{i=1}{\sum}}\lambda_{i}=0$. 
		Therefore, $\lambda_1=- \overset{n}{\underset{i=2 }{\sum}}\lambda_{i}$ and hence
		\begin{equation}\label{e6.1}
			\lambda_1\leq \sum_{i=2}^{n} |\lambda_{i}|.
		\end{equation}
	Using Cauchy-Schwarz inequality, we get
		\begin{equation}\label{e6.2}
			\left(\overset{n}{\underset{i=2 }{\sum}}|\lambda_{i}|\right)^2 \leq (n-1) \overset{n}{\underset{i=2 }{\sum}}\lambda^2_{i}.
		\end{equation} 
		By Lemma~\ref{thm 6.1}(ii) and using~\eqref{e6.1},~\eqref{e6.2}, we get
		\[
		2m+d^2\hat{d}(G)-\lambda^{2}_1 =\sum_{i=2}^{n} \lambda^2_{i}
		\geq\frac{1}{n-1}\left(\sum_{i=2}^{n} |\lambda_{i}|\right)^2 
		\geq\frac{\lambda^{2}_1}{n-1}.\]
		Hence 
		\begin{eqnarray*}
			2m+d^2\hat{d}(G)\geq \lambda^2_{1}\left(1+\frac{1}{n-1}\right)=\lambda^2_{1}\left(\frac{n}{n-1}\right),
		\end{eqnarray*}
		which leads to the required result.
	\end{proof}
	
	\begin{thm}
		Let $G$ be a adjacency-diametrical bipartite graph with $n$ vertices, $m$ edges and diameter $d$. Let $\lambda_1 \geq \lambda_2 \geq \dots \geq \lambda_n$ be the eigenvalues of $AD(G)$. Then 
		\[\lambda_n \leq \sqrt{m+\frac{1}{2}d^2\hat{d}(G)}.\]
	\end{thm}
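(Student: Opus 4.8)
The plan is to combine the origin-symmetry of the AD-spectrum of a diametrical bipartite graph with the second trace identity from Lemma~\ref{thm 6.1}(ii). First I would observe that since $G$ is diametrical bipartite, Theorem~\ref{thm 5.4} guarantees that the eigenvalues of $AD(G)$ are symmetric with respect to the origin. Under the ordering $\lambda_1 \geq \lambda_2 \geq \dots \geq \lambda_n$, this symmetry forces $\lambda_1 = -\lambda_n$, and in particular $\lambda_1^2 = \lambda_n^2$. This pairing of the two extreme eigenvalues is the crux of the whole argument.

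Next I would invoke Lemma~\ref{thm 6.1}(ii), which supplies $\sum_{i=1}^n \lambda_i^2 = 2m + d^2\hat{d}(G)$. Since every summand $\lambda_i^2 \geq 0$ and $n \geq 3$, I would retain only the two extreme terms to get $\lambda_1^2 + \lambda_n^2 \leq \sum_{i=1}^n \lambda_i^2 = 2m + d^2\hat{d}(G)$. Substituting $\lambda_1^2 = \lambda_n^2$ then yields $2\lambda_n^2 \leq 2m + d^2\hat{d}(G)$, whence $\lambda_n^2 \leq m + \tfrac{1}{2}d^2\hat{d}(G)$ and therefore $|\lambda_n| \leq \sqrt{m + \tfrac{1}{2}d^2\hat{d}(G)}$. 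Since $\lambda_n \leq |\lambda_n|$, the claimed bound follows immediately.

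The argument is short, and I do not anticipate a genuine obstacle; the single point deserving care is the identity $\lambda_1^2 = \lambda_n^2$, which rests entirely on the origin-symmetry of the spectrum established in Theorem~\ref{thm 5.4} (via the characterization of diametrical bipartiteness). It is worth noting that the method in fact delivers the stronger conclusion $|\lambda_n| \leq \sqrt{m + \tfrac{1}{2}d^2\hat{d}(G)}$, bounding the magnitude of the smallest eigenvalue rather than merely $\lambda_n$ itself, so the stated inequality is a direct consequence.
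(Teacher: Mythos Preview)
Your proposal is correct and follows essentially the same route as the paper: invoke Theorem~\ref{thm 5.4}(iii) to obtain $\lambda_1=-\lambda_n$, use Lemma~\ref{thm 6.1}(ii) for $\sum_i\lambda_i^2=2m+d^2\hat d(G)$, drop all but the two extreme terms, and conclude $2\lambda_n^2\le 2m+d^2\hat d(G)$. Your added remark that the argument actually bounds $|\lambda_n|$ (and hence also $\lambda_1$) is a valid observation not made explicit in the paper.
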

	\begin{proof}
	From Theorem~\ref{thm 5.4}(iii), we know that $\lambda_1=-\lambda_n$ and Lemma~\ref{thm 6.1} gives $\overset{n}{\underset{i=1}{\sum}}\lambda^2_{i}=2m+d^{2}\hat{d}(G)$.
		Hence
			$\lambda_1^{2}+\lambda_n^{2} \leq 2m+d^{2}\hat{d}(G)$,
	which yields		$2\lambda_n^{2}\leq 2m+d^{2}\hat{d}(G)$.	
		This completes the proof.
	\end{proof}

	An independent set $S$ of $G$ is said to be an  \textit{adjacency-diametrical independent set} (or \textit{AD independent set}) of $G$, if no two vertices of $S$ are antipodal.
 	The maximum cardinality of a adjacency-diametrical independent set in $G$, denoted by $\alpha_{AD}(G)$,  is said to be the \textit{adjacency-diametrical independence number} (or \textit{AD independence number}) of $G$~\cite{rajkumar2025-1}.	
	Clearly $\alpha_{AD}(G) \leq \alpha(G).$

		For the graph $G$ shown in Figure~\ref{graph5}, the subset $\{v_3,v_4,v_5\}$ is a AD independent set of $G$ and it is the AD independent set of $G$ of maximum cardinality. So the AD independence number $\alpha_{AD}(G)=3$.
		\begin{figure}[H]
			\begin{center}
				\includegraphics[scale=1.0]{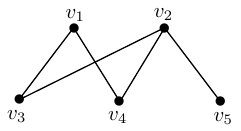}\caption {The graph $G$} \label{graph5}
			\end{center}
		\end{figure}
	Not every independent set is an AD independent set. For instance, in the graph shown in Figure~\ref{graph5}, the subset $\{v_1,v_5\}$ is an independent set, but not an AD independent set, since $v_1$ and $v_5$ are antipodal vertices.

		It is evident that the AD independence number of $G$ equals the independence number of $\mathcal{G}_G$.
	Since $AD(G) =A(\mathcal{G}_G)$, applying~\cite[Theorem 3.10.1]{cvetkovic2010introduction} yields the following result.
	\begin{thm}\label{thm 6.4}
		Let $G$ be a connected graph on $n$ vertices. Let $n^+$ and $n^-$ denote the number of positive and negative eigenvalues of $AD(G)$ respectively. Then
		\begin{equation*}
			\alpha_{AD}(G)\leq \min \{n-n^+,n-n^-\}.
		\end{equation*}
	\end{thm}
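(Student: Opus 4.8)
The plan is to reduce the statement to the classical inertia (Cvetkovi\'c) bound for the independence number, applied to the weighted graph $\mathcal{G}_G$. Recall from earlier in this section that the AD independence number of $G$ equals the independence number of $\mathcal{G}_G$, and that $AD(G)=A(\mathcal{G}_G)$. Thus it suffices to prove that for any (possibly weighted) graph $H$ on $n$ vertices whose adjacency matrix $A$ has $n^+$ positive and $n^-$ negative eigenvalues, one has $\alpha(H)\le\min\{n-n^+,\,n-n^-\}$; taking $H=\mathcal{G}_G$ and $A=AD(G)$ then yields the theorem, since an AD independent set of $G$ is by definition an independent set of $\mathcal{G}_G$.

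To prove the bound, let $S\subseteq V(H)$ be an independent set with $|S|=\alpha(H)$. Because $S$ spans no edge of $H$, the principal submatrix $A[S]$ is the zero matrix. The consequence I would record first is that the quadratic form associated with $A$ vanishes identically on the coordinate subspace $W_S=\mathrm{span}\{e_i:i\in S\}$, which has dimension $\alpha(H)$: indeed $x^{T}Ax=0$ for every $x\in W_S$. Next I would invoke the spectral decomposition of the symmetric matrix $A$. Let $U^+$ be the span of eigenvectors for the $n^+$ positive eigenvalues, so $\dim U^+=n^+$ and $y^{T}Ay>0$ for all nonzero $y\in U^+$. Since the form is strictly positive on $U^+\setminus\{0\}$ but identically zero on $W_S$, we must have $U^+\cap W_S=\{0\}$, and hence $n^+ + \alpha(H)=\dim U^+ + \dim W_S\le n$, that is $\alpha(H)\le n-n^+$. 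Repeating the same intersection argument with the span $U^-$ of eigenvectors for the $n^-$ negative eigenvalues (where the form is strictly negative) gives $\alpha(H)\le n-n^-$, and combining the two inequalities delivers the claimed minimum.

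The step I expect to require the most care is the justification that the inertia bound, stated in \cite{cvetkovic2010introduction} for ordinary unweighted adjacency matrices, applies verbatim to the weighted matrix $AD(G)=A(\mathcal{G}_G)$. The subspace-intersection argument above uses only the symmetry of $A$ and the vanishing of $A[S]$ on an independent set, both of which hold for the weighted adjacency matrix; so no genuinely new obstacle arises, consistent with the earlier remark that spectral results for unweighted graphs transfer to weighted graphs without modification. Accordingly, I would simply flag this transfer explicitly and cite the result rather than re-derive it in full.
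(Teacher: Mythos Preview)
Your proposal is correct and follows exactly the paper's approach: observe that $\alpha_{AD}(G)$ equals the independence number of the weighted graph $\mathcal{G}_G$, that $AD(G)=A(\mathcal{G}_G)$, and then invoke the Cvetkovi\'c inertia bound \cite[Theorem~3.10.1]{cvetkovic2010introduction}. The paper simply cites that result and relies on the earlier blanket remark that such spectral bounds transfer to weighted graphs, whereas you additionally supply the standard subspace-intersection argument justifying that transfer; this extra detail is sound but not required.
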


	    A proper coloring of a connected graph $G$ is said to be an \textit{adjacency-diametrical coloring} (or \textit{AD-coloring}) of $G$, if no two antipodal vertices have same color.
		The \textit{adjacency-diametrical chromatic number} (or  \textit{AD chromatic number}) of $G$, denoted by $\chi_{AD}(G)$, is the minimum number of colors needed for an adjacency-diametrical coloring of $G$. For each color, the set of all vertices which get that color is an AD independent and is called an \textit{adjacency-diametrical color class}. Equivalently, $\chi_{AD}(G)$ is the cardinality of minimum partition of $V(G)$ into AD independent subsets~\cite{rajkumar2025-2}.

	Note that for any connected graph $G$, $\chi(G) \leq \chi_{AD}(G)$.
	For example, the path graph $P_5$ has $\chi(P_5)=2$ and  $\chi_{AD}(P_5)=3$.

 It is clear that the AD chromatic number of $G$ equals the chromatic number of $\mathcal{G}_G$. 	Since $AD(G)=A(\mathcal{G}_G)$, applying \cite[Theorem~3.23]{bapat2010graphs} directly implies the following result.

	\begin{thm}\label{thm 6.5}
		Let $G$ be a connected graph with $n$ vertices and with at least one edge. Let $\lambda_1 \geq \lambda_2 \geq \dots \geq \lambda_n$ be the eigenvalues of $AD(G)$. Then
		\begin{equation*}
			\chi_{AD}(G) \geq 1 - \frac{\lambda_1}{\lambda_n}.
		\end{equation*}	
	\end{thm}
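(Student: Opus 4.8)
The plan is to transfer the inequality to the associated weighted graph $\mathcal{G}_G$ and then invoke the classical Hoffman lower bound for the chromatic number. The two facts I would use are the identity $AD(G)=A(\mathcal{G}_G)$ established earlier and the observation (recorded just above the statement) that $\chi_{AD}(G)=\chi(\mathcal{G}_G)$; the latter holds because a coloring of $G$ is an AD-coloring exactly when it assigns distinct colors to the endpoints of every edge of $\mathcal{G}_G$, the edges of $\mathcal{G}_G$ being precisely the adjacent pairs (weight $1$) and the antipodal pairs (weight $d$) of $G$. Consequently $\lambda_1$ and $\lambda_n$ are the largest and smallest eigenvalues of the weighted adjacency matrix $A(\mathcal{G}_G)$, and the claim becomes the Hoffman bound $\chi(\mathcal{G}_G)\ge 1-\lambda_1/\lambda_n$, which is exactly \cite[Theorem~3.23]{bapat2010graphs} read for $\mathcal{G}_G$.

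First I would check that the bound is well posed. Since $G$ has at least one edge, $AD(G)\neq\mathbf{0}$, and because $\mathrm{tr}(AD(G))=0$ by Lemma~\ref{thm 6.1}(i), the matrix has both a positive and a negative eigenvalue; thus $\lambda_1>0>\lambda_n$ and $\lambda_1/\lambda_n<0$, so the right-hand side is meaningful. I would then cite \cite[Theorem~3.23]{bapat2010graphs}. For completeness I would, if desired, reproduce the short argument in the weighted setting: put $k=\chi_{AD}(G)$ and fix an AD-coloring with color classes $C_1,\dots,C_k$; ordering the vertices by color class makes each diagonal block of $AD(G)$ equal to $\mathbf{0}$, precisely because each AD color class is an AD independent set and hence contains no two vertices that are adjacent or antipodal in $G$.

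With this block structure, I would run the standard positive-semidefinite argument. The matrix $M=AD(G)-\lambda_n I$ is positive semidefinite, its $i$-th diagonal block equals $-\lambda_n I$, and for a $\lambda_1$-eigenvector $u$ decomposed as $u=\sum_{i=1}^k z_i$ along the color classes one has $z_i^{\mathsf T}Mz_i=-\lambda_n\|z_i\|^2$. The seminorm $\|x\|_M=(x^{\mathsf T}Mx)^{1/2}$ satisfies the triangle inequality, so together with the Cauchy--Schwarz bound $\left(\sum_i\|z_i\|_M\right)^2\le k\sum_i\|z_i\|_M^2$ one obtains
\[
(\lambda_1-\lambda_n)\|u\|^2=u^{\mathsf T}Mu=\left\|\sum_i z_i\right\|_M^2\le k\sum_i\|z_i\|_M^2=-k\lambda_n\|u\|^2,
\]
whence $\lambda_1\le (k-1)(-\lambda_n)$ and therefore $k\ge 1-\lambda_1/\lambda_n$, as required.

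The step I expect to need the most care is the passage to the weighted graph. I must verify that every ingredient of the Hoffman argument---symmetry of the matrix, the vanishing of the diagonal blocks on the color classes, and the strict negativity of $\lambda_n$---survives when the off-diagonal entries are allowed to be $1$ or $d$ rather than only $1$. None of these steps uses the $0$--$1$ nature of the entries, so the extension is routine; this is exactly the blanket remark preceding the statement, but it is the one place where I would be explicit rather than merely quote the unweighted theorem.
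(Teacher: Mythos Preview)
Your proposal is correct and follows exactly the paper's approach: the paper also proves this result simply by noting that $\chi_{AD}(G)=\chi(\mathcal{G}_G)$ and $AD(G)=A(\mathcal{G}_G)$ and then citing \cite[Theorem~3.23]{bapat2010graphs}. You go further by spelling out the well-posedness and the Hoffman argument in the weighted setting, which the paper leaves implicit under its blanket remark that such results extend without modification.
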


	\begin{cor}\label{cor6.1}
		Let $G$ be a connected, planar graph on $n$ vertices. Let $\lambda_1 \geq \lambda_2 \geq \dots \geq \lambda_n$ be the eigenvalues of $AD(G)$. Suppose $\chi(G)= \chi_{AD}(G)$. Then $\lambda_1 \leq -3\lambda_n$.
	\end{cor}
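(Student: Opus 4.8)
The plan is to deduce this corollary from Theorem~\ref{thm 6.5} by feeding in the planarity hypothesis through the Four Color Theorem. The only non-spectral input I need is that every planar graph is $4$-colorable, i.e.\ $\chi(G)\le 4$; everything else is a short rearrangement of the eigenvalue bound already established.

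First I would record the chain of inequalities
\[
1-\frac{\lambda_1}{\lambda_n}\;\le\;\chi_{AD}(G)\;=\;\chi(G)\;\le\;4,
\]
in which the first inequality is Theorem~\ref{thm 6.5}, the middle equality is the hypothesis $\chi(G)=\chi_{AD}(G)$, and the last inequality is the Four Color Theorem applied to the planar graph $G$. Rearranging immediately gives $-\lambda_1/\lambda_n\le 3$.

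Next I would clear the denominator, and here lies the single point that requires care: the direction of the inequality when multiplying through by $\lambda_n$ depends on its sign. So before multiplying I would verify that $\lambda_n<0$ (and $\lambda_1>0$). This holds because $AD(G)$ is a nonzero real symmetric matrix with zero diagonal: its trace is $0$ by Lemma~\ref{thm 6.1}(i), so its eigenvalues sum to zero, while the matrix is not identically zero since $G$ has at least one edge (as required already for Theorem~\ref{thm 6.5}). A real symmetric matrix with vanishing trace and at least one nonzero entry must have both a strictly positive and a strictly negative eigenvalue, forcing $\lambda_1>0>\lambda_n$.

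Finally, multiplying $-\lambda_1/\lambda_n\le 3$ through by the negative quantity $\lambda_n$ reverses the inequality and yields $-\lambda_1\ge 3\lambda_n$, that is, $\lambda_1\le -3\lambda_n$, which is the assertion. The main (and essentially only) obstacle is the sign bookkeeping in this last step, for which establishing $\lambda_n<0$ is the crux; the remainder is a direct substitution of $\chi(G)\le 4$ into Theorem~\ref{thm 6.5}.
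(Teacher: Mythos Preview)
Your proof is correct and follows essentially the same route as the paper: combine the Four Color Theorem with the hypothesis $\chi(G)=\chi_{AD}(G)$ to obtain $\chi_{AD}(G)\le 4$, and then invoke Theorem~\ref{thm 6.5}. The only difference is that you make explicit the verification $\lambda_n<0$ needed to clear the denominator, which the paper's terser proof leaves implicit.
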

	\begin{proof}
		It is known that every planar graph is $4-$colorable~\cite{bondy1976}. It follows that $\chi_{AD}(G)\leq 4$. Consequently, Theorem~\ref{thm 6.5} yields $\lambda_1 \leq -3\lambda_n$.
	\end{proof}

	There are several graphs for which the condition $\chi(G)= \chi_{AD}(G)$ of Corollary~\ref{cor6.1} holds. Examples include  the path $P_{2n}$, the cycle $C_n$ for $n=4k+2$ with $k=1,2,\dots$, and double star.

	\begin{defn}\normalfont
	Let $G$ be  a connected graph with diameter $d$. The \textit{adjacency-diametrical degree} (or \textit{AD degree}) of a vertex $v$ in $G$, denoted by $\doublehat{d}_{G}(v)$ (or simply $\doublehat{d}(v)$), is defined as
		\begin{equation*}
			\doublehat{d}_G(v)=d(v)+d \hat{d}(v).
		\end{equation*}
		The maximum and the minimum of the AD degrees of vertices in $G$ are denoted by $\doublehat{\Delta}(G)$ and $\doublehat{\delta}(G)$, respectively.
		If every vertex of $G$ has the same AD degree $r$, then $G$ is called the \textit{adjacency-diametrical regular} (or \textit{AD regular}) of degree $r$ or simply $r$-\textit{AD regular}.
		
		The \textit{mean adjacency-diametrical degree} of $G$, denoted by $\bar{d}_{AD}(G)$,  is defined as \[\bar{d}_{AD}(G)=\frac{1}{n} \overset{}{\underset{v \in V(G)}{\sum}} \doublehat{d}_{G}(v).\] 
	\end{defn}

Observe that the AD degree of a vertex in $G$ coincide with the weighted degree of the corresponding vertex in $\mathcal{G}_G$. Consequently, the maximum and minimum AD degrees of $G$ agree with the maximum and minimum weighted degrees of $\mathcal{G}_G$ respectively. Hence, $G$ is AD regular if and only if $\mathcal{G}_G$ is weighted regular. Moreover, the mean adjacency-diametrical degree of $G$ is equal to the mean weighted degree of $\mathcal{G}_G$.
	
	\begin{example}\normalfont
		Consider the cycle graph $C_6$. Each vertex in $C_6$ has degree $2$ and diametrical degree $1$,  and the diameter of $C_6$ is $3$. Hence the AD degree of every vertex  in $C_6$ is $5$. Thus $C_6$ is $5$-AD regular.	
	\end{example}

	\begin{rem}\normalfont
	 Every distance-regular graph is AD-regular. However, not every AD regular graph is distance-regular. For instance, the hexagonal prism graph shown in Figure~\ref{F4a} is AD regular but not distance regular. Also, not every regular graph is AD-regular.  For instance, the Frucht graph shown in Figure~\ref{F4b} is regular but not AD regular. Moreover, we have not discussed whether every AD regular graph must also be regular, nor have we provided an example showing that AD regular does not imply regular.
	 
	 \begin{figure}[h!]	
	 	\centering 
	 			  \begin{subfigure}{0.4\linewidth}
	 			  	\centering 		\includegraphics[scale=1]{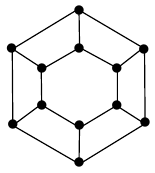}
	 		\caption{}\label{F4a}
	 	\end{subfigure} \hspace{.3cm}
	 	\begin{subfigure}{0.3\linewidth}
	 		\centering 
	 		\includegraphics[scale=.9]{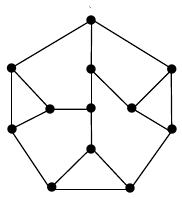}
	 		\caption{}\label{F4b}
	 	\end{subfigure} 
	 	\caption{(a) The hexagonal prism graph, (b) The Frucht graph.}\label{F4}
	 \end{figure}
	\end{rem}
	
	Notice that the AD degree of each vertex in $G$ equals the weighted degree of the corresponding vertex in $\mathcal{G}_G$. Then from~\cite[Proposition 1.1.2]{cvetkovic2010introduction} and~\cite[Theorem 3.2.1]{cvetkovic2010introduction}, we obtain the following two results.

	\begin{pro}\label{pro 6.1}
		A connected graph $G$ is $r$-AD regular if and only if all one vector is an eigenvector of $G$ with corresponding eigenvalue $r$.
	\end{pro}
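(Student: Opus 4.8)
The plan is to translate the AD-regularity condition into a statement about the row sums of $AD(G)$ and then invoke the standard characterization of regular graphs via the all-ones vector, adapted to the weighted setting. The crucial link, already established in the preceding discussion, is that $AD(G) = A(\mathcal{G}_G)$ and that the AD degree $\doublehat{d}_G(v_i)$ of a vertex $v_i$ coincides with the weighted degree of the corresponding vertex in $\mathcal{G}_G$.

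First I would compute the $i$-th component of the vector $AD(G)\,\mathbf{1}$. Since multiplying by the all-ones vector simply sums the entries of each row, the $i$-th component equals the $i$-th row sum of $AD(G)$. By definition of $AD(G)$, this row sum contributes a $1$ for each vertex adjacent to $v_i$ and a $d$ for each vertex antipodal to $v_i$, so it equals $d(v_i) + d\,\hat{d}(v_i) = \doublehat{d}_G(v_i)$. Thus $(AD(G)\,\mathbf{1})_i = \doublehat{d}_G(v_i)$ for every $i$.

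With this identity in hand the equivalence is immediate in both directions. For the forward implication, if $G$ is $r$-AD regular then $\doublehat{d}_G(v_i) = r$ for all $i$, so every component of $AD(G)\,\mathbf{1}$ equals $r$, giving $AD(G)\,\mathbf{1} = r\,\mathbf{1}$; hence $\mathbf{1}$ is an eigenvector of $AD(G)$ with eigenvalue $r$. Conversely, if $\mathbf{1}$ is an eigenvector of $AD(G)$ with eigenvalue $r$, then $AD(G)\,\mathbf{1} = r\,\mathbf{1}$ forces $\doublehat{d}_G(v_i) = r$ for every $i$, so all AD degrees agree and $G$ is $r$-AD regular. Alternatively, one can bypass the direct row-sum computation entirely by appealing to the observation that $G$ is AD regular if and only if $\mathcal{G}_G$ is weighted regular, and then citing the weighted analogue of \cite[Proposition 1.1.2]{cvetkovic2010introduction}, which states exactly that a weighted graph is weighted $r$-regular precisely when $\mathbf{1}$ is an eigenvector of its (weighted) adjacency matrix with eigenvalue $r$.

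There is no genuine obstacle here: the result is the weighted restatement of a textbook fact, and the only point requiring care is making explicit that the row sums of $AD(G)$ are precisely the AD degrees. I would present the short self-contained row-sum argument as the main proof, since it keeps the paper self-contained, and mention the citation route as the conceptual justification.
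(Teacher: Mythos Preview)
Your proposal is correct and aligns with the paper's approach: the paper simply observes that the AD degree equals the weighted degree in $\mathcal{G}_G$ and then cites \cite[Proposition~1.1.2]{cvetkovic2010introduction}, which is exactly your ``alternative'' citation route. Your direct row-sum argument just unpacks that citation, so the two treatments are essentially the same.
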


	\begin{thm}\label{thm 6.7}
		Let $G$ be a connected graph with $n$ vertices and diameter $d$. Let $\lambda_1  \ge \lambda_2 \ge \dots \ge \lambda_n$ be the eigenvalues of $AD(G)$. Then\\
		\begin{equation*}
			\doublehat{\delta} \leq \bar{d}_{AD}(G) \leq \lambda_1 \leq \doublehat{\Delta}.
		\end{equation*}
		Equality holds if and only if $G$ is AD regular.
	\end{thm}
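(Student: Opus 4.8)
The plan is to treat $AD(G)=A(\mathcal{G}_G)$ as the adjacency matrix of the connected weighted graph $\mathcal{G}_G$ and to run the classical argument that sandwiches the spectral radius of a nonnegative symmetric matrix between its average and its maximum row sum. First I would observe that since $G$ is connected, the weighted graph $\mathcal{G}_G$ is connected, so $AD(G)$ is a nonnegative, symmetric, and irreducible matrix; by the Perron--Frobenius theorem its largest eigenvalue $\lambda_1$ admits a positive eigenvector, and by symmetry it satisfies the Rayleigh--Ritz characterization $\lambda_1=\max_{x\neq \mathbf{0}} \tfrac{x^{T}AD(G)\,x}{x^{T}x}$. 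The leftmost inequality $\doublehat{\delta}\le \bar{d}_{AD}(G)$ is then immediate, since $\bar{d}_{AD}(G)$ is the arithmetic mean of the AD degrees and hence is at least their minimum.

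For the bound $\bar{d}_{AD}(G)\le\lambda_1$, I would evaluate the Rayleigh quotient at the all-ones vector $\mathbf{1}$. Because the $i$-th row sum of $AD(G)$ equals the weighted degree of $v_i$ in $\mathcal{G}_G$, which is precisely $\doublehat{d}(v_i)$, we obtain $\mathbf{1}^{T}AD(G)\,\mathbf{1}=\sum_{v\in V(G)}\doublehat{d}(v)=n\,\bar{d}_{AD}(G)$ together with $\mathbf{1}^{T}\mathbf{1}=n$, so the quotient equals $\bar{d}_{AD}(G)$ and the variational characterization forces $\lambda_1\ge \bar{d}_{AD}(G)$. For the rightmost bound $\lambda_1\le\doublehat{\Delta}$, I would take a positive Perron eigenvector $x$ for $\lambda_1$ and let $x_k=\max_i x_i$. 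Comparing the $k$-th coordinate of $AD(G)x=\lambda_1 x$ gives $\lambda_1 x_k=\sum_j (AD(G))_{kj}x_j\le x_k\sum_j (AD(G))_{kj}=\doublehat{d}(v_k)\,x_k\le\doublehat{\Delta}\,x_k$, and dividing by $x_k>0$ yields $\lambda_1\le\doublehat{\Delta}$.

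It remains to settle the equality case, which I expect to be the main obstacle, since it requires tying all three inequalities together rather than analysing them in isolation. If $G$ is $r$-AD regular, then $\doublehat{\delta}=\bar{d}_{AD}(G)=\doublehat{\Delta}=r$, and by Proposition~\ref{pro 6.1} the vector $\mathbf{1}$ is an eigenvector with eigenvalue $r$; irreducibility and nonnegativity force this to be the Perron eigenvalue, so $\lambda_1=r$ and the whole chain collapses to equalities. Conversely, equality throughout in particular gives $\doublehat{\delta}=\doublehat{\Delta}$, so every vertex has the same AD degree and $G$ is AD regular. Alternatively, tracing equality back through the max-component step shows that $\sum_j (AD(G))_{kj}x_j = x_k\sum_j (AD(G))_{kj}$ forces $x_j=x_k$ for every neighbour of $v_k$ in $\mathcal{G}_G$, and connectedness propagates this until $x$ is constant; by Proposition~\ref{pro 6.1} this again means $G$ is AD regular, completing the argument.
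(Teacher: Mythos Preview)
Your proof is correct and follows essentially the same approach as the paper: both exploit the identification $AD(G)=A(\mathcal{G}_G)$ together with the fact that AD degrees in $G$ are weighted degrees in $\mathcal{G}_G$, and then invoke the classical bound sandwiching the spectral radius between the mean and maximum degree. The only difference is cosmetic---the paper simply cites \cite[Theorem~3.2.1]{cvetkovic2010introduction}, whereas you unpack the standard Rayleigh--Ritz/Perron--Frobenius argument in full.
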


	Notice that the mean adjacency-diametrical degree of $G$ equals the mean weighted degree of $\mathcal{G}_G$. Since $AD(G)=A(\mathcal{G}_G)$, by~\cite[Theorem 3.5.7]{cvetkovic2010introduction} we obtain the following result.

	\begin{thm}\label{thm 6.8}
	Let $G$ be a AD regular graph and $(r=)\lambda_1 \geq \lambda_2 \geq \dots \geq \lambda_n$ be the eigenvalues of $AD(G)$. Let $H$ be an induced subgraph of $G$ with $n_1$ vertices and mean adjacency-diametrical degree $\bar{d}_{AD}(H)$. Then
		\begin{equation}\label{e6.7}
			\frac{n_1(r-\lambda_n)}{n}+\lambda_n \leq \bar{d}_{AD}(H).
		\end{equation}
	\end{thm}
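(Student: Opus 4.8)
The plan is to reduce the statement to a Rayleigh-quotient estimate on the weighted graph $\mathcal{G}_G$, which is exactly the content of the cited result \cite[Theorem 3.5.7]{cvetkovic2010introduction}. Since $AD(G)=A(\mathcal{G}_G)$ and $G$ is AD regular, Proposition~\ref{pro 6.1} tells us that the all-ones vector $\mathbf{1}$ is an eigenvector of $A:=AD(G)$ with eigenvalue $r=\lambda_1$. I would let $S=V(H)$ with $|S|=n_1$ and take $x\in\mathbb{R}^n$ to be the indicator vector of $S$. The key observation is that the quadratic form $x^\top A x=\sum_{i,j\in S}A_{ij}$ sums precisely the weighted degrees, within the subgraph of $\mathcal{G}_G$ induced on $S$, of the vertices of $S$; hence $x^\top A x=n_1\,\bar{d}_{AD}(H)$, where $\bar{d}_{AD}(H)$ is read as the mean weighted degree of this induced subgraph.

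Next I would split $x$ along and orthogonal to the Perron direction. Writing $x=\tfrac{n_1}{n}\mathbf{1}+y$ with $y\perp\mathbf{1}$ (legitimate since $\langle x,\mathbf{1}\rangle=n_1$), the cross terms vanish because $A\mathbf{1}=r\mathbf{1}$ gives $\mathbf{1}^\top A y=r\,\mathbf{1}^\top y=0$. Therefore
\[
x^\top A x=\Bigl(\tfrac{n_1}{n}\Bigr)^2\mathbf{1}^\top A\mathbf{1}+y^\top A y=\frac{n_1^2 r}{n}+y^\top A y.
\]
Since $\lambda_n$ is the smallest eigenvalue of the symmetric matrix $A$, the Rayleigh-quotient bound gives $y^\top A y\ge \lambda_n\|y\|^2$, and a direct Pythagorean computation yields $\|y\|^2=\|x\|^2-\tfrac{n_1^2}{n}=n_1-\tfrac{n_1^2}{n}=\tfrac{n_1(n-n_1)}{n}$.

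Assembling these pieces, I would obtain
\[
n_1\,\bar{d}_{AD}(H)=x^\top A x\ge \frac{n_1^2 r}{n}+\lambda_n\,\frac{n_1(n-n_1)}{n},
\]
and dividing through by $n_1$ and simplifying gives $\bar{d}_{AD}(H)\ge \tfrac{n_1(r-\lambda_n)}{n}+\lambda_n$, as required. The one point that needs care --- and the main conceptual obstacle --- is the interpretation of $\bar{d}_{AD}(H)$: for the identity $x^\top A x=n_1\bar{d}_{AD}(H)$ to hold, the mean AD degree of $H$ must be taken relative to the subgraph of $\mathcal{G}_G$ induced on $V(H)$ (equivalently, counting only those adjacencies and antipodal pairs of $G$ both of whose endpoints lie in $V(H)$, with the ambient diameter $d$ as the weight), rather than recomputing distances and diameter inside $H$. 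With this reading the claim is precisely the weighted-graph instance of \cite[Theorem 3.5.7]{cvetkovic2010introduction}.
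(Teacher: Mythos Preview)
Your proposal is correct and follows the same route as the paper: both reduce the statement to the weighted graph $\mathcal{G}_G$ via $AD(G)=A(\mathcal{G}_G)$ and invoke \cite[Theorem~3.5.7]{cvetkovic2010introduction}; the paper simply cites that result, whereas you spell out its Rayleigh-quotient proof in full. Your closing caveat about the meaning of $\bar{d}_{AD}(H)$ is well taken --- the paper tacitly reads it as the mean weighted degree of the subgraph of $\mathcal{G}_G$ induced on $V(H)$, exactly as you do, rather than recomputing diameters inside $H$.
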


	Clearly $G$ is AD regular if and only if $\mathcal{G}_G$ is regular. Since $AD(G)=A(\mathcal{G}_G)$, from~\cite[Theorem 3.10.2]{cvetkovic2010introduction},
	the following result can be deduced. 
	\begin{thm}\label{thm 6.9}
		If $G$ is a AD regular graph and $(r=)\lambda_1 \geq \lambda_2 \geq \dots \geq \lambda_n$ be the eigenvalues of $AD(G)$. Then
		\begin{equation*}
			\alpha_{AD}(G) \leq n \frac{-\lambda_n}{\lambda_1 - \lambda_n}.
		\end{equation*}
	\end{thm}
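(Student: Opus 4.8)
The plan is to transfer the statement to the associated weighted graph $\mathcal{G}_G$ and then invoke the Hoffman ratio bound. Recall from the preceding discussion that $\alpha_{AD}(G)=\alpha(\mathcal{G}_G)$, that $AD(G)=A(\mathcal{G}_G)$, and that $G$ is AD regular precisely when $\mathcal{G}_G$ is (weighted) regular. By Proposition~\ref{pro 6.1}, the all-ones vector $\mathbf{1}$ is then an eigenvector of $AD(G)$ with eigenvalue $r$; since the off-diagonal entries of $AD(G)=A(\mathcal{G}_G)$ are nonnegative and $\mathcal{G}_G$ is connected, this positive eigenvector is associated with the spectral radius, so $r=\lambda_1$ (as is already recorded in Theorem~\ref{thm 6.7}). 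With these identifications in hand, the bound is exactly \cite[Theorem 3.10.2]{cvetkovic2010introduction} applied to $\mathcal{G}_G$.

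To make the argument self-contained I would reprove the ratio bound directly on $M:=AD(G)$. Let $S$ be an AD independent set of maximum size $s=\alpha_{AD}(G)$, and let $\mathbf{1}_S$ be its characteristic vector. Because no two vertices of $S$ are adjacent or antipodal, every entry $M_{ij}$ with $i,j\in S$ vanishes, and so does the diagonal, giving $\mathbf{1}_S^{\top}M\,\mathbf{1}_S=0$. I would then decompose $\mathbf{1}_S=\tfrac{s}{n}\mathbf{1}+\mathbf{x}$ with $\mathbf{x}\perp\mathbf{1}$, so that $\|\mathbf{x}\|^2=s-\tfrac{s^2}{n}$, and use $M\mathbf{1}=\lambda_1\mathbf{1}$ together with $\mathbf{1}^{\top}\mathbf{x}=0$ to obtain
\[
0=\mathbf{1}_S^{\top}M\,\mathbf{1}_S=\frac{\lambda_1 s^2}{n}+\mathbf{x}^{\top}M\,\mathbf{x}.
\]

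Finally I would feed in the Rayleigh bound $\mathbf{x}^{\top}M\,\mathbf{x}\ge\lambda_n\|\mathbf{x}\|^2$, valid because $\mathbf{x}$ lies in the span of the eigenvectors orthogonal to $\mathbf{1}$, to deduce $-\tfrac{\lambda_1 s^2}{n}\ge\lambda_n\bigl(s-\tfrac{s^2}{n}\bigr)$. Dividing by $s>0$ and solving for $s/n$, the coefficient $\lambda_n-\lambda_1$ is strictly negative, which reverses the inequality and yields $s\le n\,\tfrac{-\lambda_n}{\lambda_1-\lambda_n}$.

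The main obstacle I anticipate is bookkeeping rather than conceptual: one must be careful that $r=\lambda_1$ (so that $\mathbf{1}$ is genuinely the top eigenvector used in the projection) and that $\lambda_1>\lambda_n$, so that the final division reverses the inequality in the correct direction. The latter requires $G$ to contain at least one edge, which holds for any connected graph on at least two vertices, and it also ensures the denominator $\lambda_1-\lambda_n$ is nonzero.
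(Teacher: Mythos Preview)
Your proposal is correct and matches the paper's approach exactly: the paper simply notes that $AD(G)=A(\mathcal{G}_G)$, that $\alpha_{AD}(G)=\alpha(\mathcal{G}_G)$, and that $G$ is AD regular iff $\mathcal{G}_G$ is regular, then invokes \cite[Theorem 3.10.2]{cvetkovic2010introduction}. Your additional self-contained derivation of the Hoffman ratio bound via the decomposition $\mathbf{1}_S=\tfrac{s}{n}\mathbf{1}+\mathbf{x}$ and the Rayleigh inequality is a welcome elaboration that the paper omits, and the bookkeeping (in particular $r=\lambda_1$ and $\lambda_1>\lambda_n$) is handled correctly.
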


Since $AD(G)=A(\mathcal{G}_G)$, applying~\cite[Theorem 3.10.3]{cvetkovic2010introduction}
,  we get the following result.
	
	\begin{thm}
		Let $G$ be a connected graph.  Let $\lambda_1 \geq \lambda_2 \geq \dots \geq \lambda_n$ be the eigenvalues of $AD(G)$. Let $t^{-}, t^{0}, t^{+}$ denote the number of eigenvalues $\lambda_{i}$ less than, equal to, or greater than $-1$ respectively. Let $m=\min \{t^0+t^{-}+1, t^0+t^{+}, 1+\lambda_{1}\}$. Then  $\omega(G) \leq m$. If $m=t^0+t^{-}+1$ and the eigenvalues greater than $-1$ is exceed $t^0+t^{-}$, then $\omega(G) \leq m-1$.
	\end{thm}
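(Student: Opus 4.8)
The plan is to reduce the statement to the classical eigenvalue bound for the clique number of an (unweighted) graph, namely \cite[Theorem 3.10.3]{cvetkovic2010introduction}, applied to the symmetric matrix $AD(G)=A(\mathcal{G}_G)$. The one point that must be checked before the citation can be invoked is that a maximum clique of $G$ still yields the ``clean'' complete-graph submatrix inside $AD(G)$, despite the weighted edges of $\mathcal{G}_G$. So first I would fix a maximum clique $K$ of $G$ with $|K|=\omega(G)=:\omega$. Since any two vertices of $K$ are adjacent in $G$, they are at distance $1$, so the corresponding entry of $AD(G)$ equals $1$, while the diagonal entries are $0$. Hence the principal submatrix $AD(G)[K]$ is exactly $A(K_\omega)$, whose eigenvalues are $\omega-1$ (simple) and $-1$ with multiplicity $\omega-1$. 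The weight-$d$ edges of $\mathcal{G}_G$ play no role here, because clique pairs are at distance $1$, not $d$; the case $d=1$ is trivial, since then $G=K_n$.

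Next I would invoke Cauchy interlacing for the principal submatrix $B=AD(G)[K]=A(K_\omega)$ of $AD(G)$, writing $\lambda_1\ge\cdots\ge\lambda_n$ for the eigenvalues of $AD(G)$ and $\beta_1=\omega-1$, $\beta_2=\cdots=\beta_\omega=-1$ for those of $B$. The inequality $\lambda_1\ge\beta_1=\omega-1$ gives $\omega\le 1+\lambda_1$. The upper interlacing inequalities $\lambda_i\ge\beta_i$ for $i=1,\dots,\omega$ force $\lambda_1,\dots,\lambda_\omega\ge -1$, so at least $\omega$ eigenvalues are $\ge -1$, that is, $\omega\le t^0+t^+$. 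The lower interlacing inequalities $\lambda_{n-\omega+i}\le\beta_i$ for $i=2,\dots,\omega$ force $\lambda_{n-\omega+2},\dots,\lambda_n\le -1$, so at least $\omega-1$ eigenvalues are $\le -1$, that is, $\omega\le t^0+t^-+1$. Taking the minimum of the three bounds yields $\omega\le m$. It is convenient to record these as an inertia statement by a shift: the submatrix $AD(G)[K]+I_\omega$ is the all-ones matrix $J_\omega$, which is positive semidefinite of rank $1$, so the three bounds can be read off directly from the positive/zero/negative inertia of $AD(G)+I$, which equals $(t^+,t^0,t^-)$.

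Finally, for the refinement I would examine the equality case of the bound $\omega\le t^0+t^-+1$. If $\omega=t^0+t^-+1$, then the $\omega-1$ eigenvalues $\lambda_{n-\omega+2},\dots,\lambda_n$ produced above are \emph{all} the eigenvalues that are $\le -1$, and one must analyze the borderline eigenvalue $\lambda_{n-\omega+1}$ together with the multiplicity of $-1$ in $B$. The extra hypothesis that the number of eigenvalues exceeding $-1$ is larger than $t^0+t^-$ is what I expect to rule out this borderline configuration and force an additional eigenvalue strictly below $-1$, sharpening the bound to $\omega\le m-1$. I expect this refinement to be the main obstacle, since it is the only part that does not follow from the three elementary interlacing inequalities: it instead requires tracking whether the relevant interlacing inequalities are strict. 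This is precisely the sharpened conclusion of \cite[Theorem 3.10.3]{cvetkovic2010introduction}, which I would cite for the exact bookkeeping rather than reprove from scratch.
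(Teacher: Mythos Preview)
Your proposal is correct and follows the same route as the paper, which simply cites \cite[Theorem~3.10.3]{cvetkovic2010introduction} after noting $AD(G)=A(\mathcal{G}_G)$. In fact you supply the one detail the paper glosses over: that a maximum clique of $G$ (as opposed to $\mathcal{G}_G$) yields the principal submatrix $A(K_\omega)=J_\omega-I_\omega$ inside $AD(G)$ because clique vertices are at distance~$1$, so the weight-$d$ entries never appear; this is precisely what makes the cited theorem applicable to $\omega(G)$ rather than to the clique number of the weighted auxiliary graph.
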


	\section{Graph products and AD-spectra}\label{s7}
	
	The join of two vertex-disjoint graphs $G$ and $H$, denoted $G \vee H$, 
	is the graph obtained from the union $G \cup H$ by adding all possible edges 
	between every vertex of $G$ and every vertex of $H$. Since the diameter of  $G \vee H$ is $2$, so the AD-spectrum of $G \vee H$ coincides the distance spectrum of $G \vee H$, which was described in~\cite[Theorem 2]{stevanovic2009thedistance} for the case when $G$ and $H$ are regular.

 The lexicographic product of two graphs $G_1 = (V_1, E_1)$ and $G_2 = (V_2, E_2)$, denoted $G_1 [G_2]$, is the graph with vertex set $V_1 \times V_2$. 
Two vertices $(u_i, v_j)$ and $(u_k, v_\ell)$ are adjacent in $G_1[G_2]$ if $u_i u_k \in E_1$ or ($u_i = u_k$ and $v_j v_\ell \in E_2)$.
	
	We now compute the AD-spectrum of the lexicographic product of two graphs.
	
	\begin{thm}
		Let $G$ be a connected graph with $n$ vertices and let $H$ be an $r$-regular graph with $m$ vertices. Let $\lambda_{1}, \lambda_2, \dots, \lambda_m$ be the AD-spectrum of $G$ and $(r=)\mu_1, \mu_2, \dots , \mu_n$ be the spectrum of $A(H)$. 
		\begin{enumerate}[(i)]
			\item If the diameter of $G[H]$ is $2$, then the AD-spectrum of $G[H]$ contains the eigenvalue $m \mu_i+2m-r-2$ with multiplicity $1$ and $-(\lambda_j+2)$ with multiplicity $n$, for $i=1,2,\dots,n$; $j=2,3,\dots,m-1$.

			\item If the diameter of $G[H]$ is greater than $2$, then the AD-spectrum of $G[H]$ contains the eigenvalue $m \mu_i+r$ with multiplicity $1$ and $\lambda_j$ with multiplicity $n$, for $i=1,2,\dots,n$; $j=2,3,\dots,m-1$.
		\end{enumerate}
		\end{thm}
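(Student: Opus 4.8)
The plan is to exploit the product structure by ordering the $nm$ vertices of $G[H]$ copy-by-copy, so that $AD(G[H])$ becomes an $n\times n$ array of $m\times m$ blocks which I can write as a sum of two Kronecker products sharing an explicit eigenbasis. The first step is to pin down the distance function of $G[H]$. Directly from the adjacency rule one checks that the distance between $(u_i,v_j)$ and $(u_k,v_\ell)$ equals $0$ if $i=k,\ j=\ell$; equals $1$ if $i=k$ and $v_j\sim v_\ell$ in $H$; equals $2$ if $i=k,\ j\ne\ell$ and $v_j\not\sim v_\ell$ (here connectivity of $G$ guarantees a detour through a neighbouring copy); and equals $d_G(u_i,u_k)$ whenever $i\ne k$. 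From this table it follows that $\mathrm{diam}(G[H])=2$ exactly when either $G$ is complete with $H$ incomplete or $\mathrm{diam}(G)=2$, while $\mathrm{diam}(G[H])=\mathrm{diam}(G)$ as soon as $\mathrm{diam}(G)\ge 3$; these are precisely parts (i) and (ii).

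Next I would translate the distance table into block form. For part (ii), where $d:=\mathrm{diam}(G[H])=\mathrm{diam}(G)\ge 3$, a within-copy non-adjacent pair lies at distance $2<d$ and so contributes $0$ to the AD matrix; hence every diagonal block is $A(H)$, and the $(i,k)$-block with $i\ne k$ equals $AD(G)_{ik}\,J_m$ (since a between-copy pair attains the product diameter iff $d_G(u_i,u_k)=d$). This yields
\[
AD(G[H]) = AD(G)\otimes J_m \;+\; I_n\otimes A(H).
\]
For part (i), where $d=2$, the AD matrix coincides with the distance matrix, so each diagonal block is the within-copy distance matrix $B:=2J_m-2I_m-A(H)$, and—using $AD(G)=D(G)$, valid because $\mathrm{diam}(G)\le 2$ here—the off-diagonal blocks again assemble into $AD(G)\otimes J_m$, giving
\[
AD(G[H]) = AD(G)\otimes J_m \;+\; I_n\otimes B.
\]

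Finally I would diagonalize over the orthogonal decomposition $\mathbb{R}^{nm}=(\mathbb{R}^n\otimes\mathbf 1_m)\oplus(\mathbb{R}^n\otimes\mathbf 1_m^{\perp})$. Because $H$ is $r$-regular, $\mathbf 1_m$ is a common eigenvector of $J_m$, $A(H)$ and $B$, with eigenvalues $m$, $r$ and $2m-r-2$; and any $w\perp\mathbf 1_m$ with $A(H)w=\mu w$ satisfies $J_m w=0$ and $B w=-(\mu+2)w$. Taking an AD-eigenvector $x$ of $G$ with $AD(G)x=\lambda x$, the vector $x\otimes\mathbf 1_m$ is an eigenvector of $AD(G[H])$ with eigenvalue $m\lambda+r$ in part (ii) and $m\lambda+2m-r-2$ in part (i); these supply the $n$ multiplicity-one eigenvalues. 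For each non-principal adjacency eigenvalue $\mu$ of $H$ and each $y\in\mathbb{R}^n$, the vector $y\otimes w$ lies in the kernel of $AD(G)\otimes J_m$ and is an eigenvector with eigenvalue $\mu$ (part (ii)) or $-(\mu+2)$ (part (i)); letting $y$ run over a basis of $\mathbb{R}^n$ produces multiplicity $n$ for each such $\mu$, and the count $n+n(m-1)=nm$ confirms that these exhaust the spectrum. The main obstacle is the very first step: establishing the distance formula in every configuration and, in particular, justifying the diameter dichotomy rigorously, since the two block decompositions—and hence the entire eigenvalue computation—depend on knowing exactly which distances equal the diameter.
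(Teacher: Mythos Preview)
Your proposal is correct and follows essentially the same route as the paper: the same Kronecker decomposition $AD(G[H])=AD(G)\otimes J_m+I_n\otimes A(H)$ for part~(ii), and the same eigenbasis $\{x\otimes\mathbf 1_m\}\cup\{y\otimes w:w\perp\mathbf 1_m\}$ exploiting the $r$-regularity of $H$. The only difference is that for part~(i) the paper simply observes $AD(G[H])=D(G[H])$ and cites Indulal's distance-spectrum result, whereas you carry out that computation explicitly via the block $B=2J_m-2I_m-A(H)$; your version is self-contained but not a different method.
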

	\begin{proof}
		(i) Since the diameter of $G[H]$ is 2, we have $AD(G[H]) = D(G[H])$. 
		The distance spectrum of $D(G[H])$ is given in~\cite[Theorem~3.3]{indulal2009distance}, and therefore the claim follows.
		
		(ii) Suppose the diameter of $G[H]$ is greater than $2$. Let $AD(G) = [d_{ij}]$. 
		With a suitable labeling of the vertices of $G[H]$, the AD matrix of $G[H]$ can be expressed as follows:
		\[\begin{aligned}
				AD(G[H])&=\begin{bmatrix}
				A(H) & d_{12}J_m & d_{13}J_m & \dots &  d_{1n}J_m\\
				d_{21}J_m & A(H) & d_{23}J_m & \dots &  d_{2n}J_m\\
				d_{31}J_m & d_{32}J_m & A(H) & \dots &  d_{3n}J_m\\
				\vdots & \vdots & \vdots & \ddots  & \vdots \\
				d_{(n-1)1}J_m & d_{(n-1)2}J_m & d_{(n-1)3}J_m & \dots & d_{(n-1)n}J_m\\
				d_{n1}J_m & d_{n2}J_m & d_{n3}J_m & \dots & A(H)
			\end{bmatrix}\\
			&=(AD(G) \otimes J_m) + (I_n \otimes A(H)).
		\end{aligned}
		\]
				
		Since $H$ is $r$-regular, the all one vector $\bm{1}_{m \times 1}$ is an eigenvector of $A(H)$ corresponding to the eigenvalue $r$. The remaining eigenvalues $\lambda_j$ of $A(H)$, with eigenvectors $X_j$, are orthogonal to $\bm{1}$. Let $Y_k$, $k = 1,2,\dots,n$, denote the $n$ linearly independent eigenvectors of $I_n$ associated with the eigenvalue $1$. Then, 
			 	\[\begin{aligned}
			AD(G[H]) (Y_k \otimes X_j)& =\left((AD(G) \otimes J_m) + (I_n \otimes A(H))\right) (Y_k \otimes X_j)\\
			&= (AD(G) Y_k) \otimes (J_m X_j) + (I_n Y_k) \otimes (A(H) X_j)\\
			&= 0 + (Y_k \otimes \lambda_j X_j)\\
			&= \lambda_j (Y_k \otimes X_j).
		\end{aligned}\]
		Thus,	for each $j = 2,3,\dots,m$, the vectors $Y_k \otimes X_j$ (for $k = 1,2,\dots,n$) are eigenvectors of $AD(G[H])$ corresponding to the eigenvalue $\lambda_j$. 		
		Let $Z_i$ be an eigenvector corresponding to the eigenvalue $\mu_i$ of $AD(G)$ for $i=1,2\ldots,n$. Then
		\[\begin{aligned}
				AD(G[H]) (Z_i \otimes \bm{1}_m)& =\left((AD(G) \otimes J_m) + (I_n \otimes A(H))\right) (Z_i \otimes \bm{1}_m)\\
			&= (AD(G) Z_i) \otimes (J_m \bm{1}_m ) + (I_n Z_i) \otimes (A(H)  \bm{1}_m)\\
			&= (\mu_iZ_i \otimes m\bm{1}_m)+ (Z_i \otimes r\bm{1}_m)\\
			&= (m\mu_i+r) (Z_i \otimes \bm{1}_m).
		\end{aligned}\]
		Therefore, $m\mu_i+r$ is an eigenvalue of $AD(G[H])$ with eigenvector $Z_i \otimes \bm{1}_m$. Furthermore, the $nm$ vectors of the form $Z_i \otimes \bm{1}_m$ and $Y_k \otimes X_j$ are linearly independent. Since eigenvectors associated with distinct eigenvalues are linearly independent, and $AD(G[H])$ has a basis consisting entirely of eigenvectors, the theorem is established.
		\end{proof}
		
		 Let $G_1 = (V_1, E_1)$ and $G_2 = (V_2, E_2)$ be two graphs with vertex sets $V_1 = \{u_1, u_2, \dots, u_n\}$ and $V_2 = \{v_1, v_2, \dots, v_n\}$, respectively. The Cartesian product of $G_1$ and $G_2$, denoted by $G_1 \Box G_2$, is the graph with vertex set $V_1 \times V_2$, and two vertices $(u_i, v_j)$ and $(u_k, v_\ell)$ are adjacent if	$u_i = u_k$ and $v_jv_\ell \in E_2$, or $u_iu_k \in E_1$ and $v_j = v_\ell$.
		
		Next, we determine the AD-spectrum of the Cartesian product of two graphs.
		
		\begin{thm}
			Let $G$ and $H$ be two distance regular graphs with $n$ and $m$ vertices respectively and let the diameters of $G$ and $H$ be denoted by $d_G$ and $d_H$ respectively. Let the spectrum of $A(G)$ and $A(H)$ be $\lambda_{1}, \lambda_2, \dots, \lambda_n$ and $\mu_1, \mu_2, \dots , \mu_m$, respectively. Let the spectrum of $A_{d_G}(G)$ and $A_{d_H}(H)$ be $\gamma_1,\gamma_2,\dots,\gamma_n$ and $\delta_1, \delta_2,\dots,\delta_m$ respectively, such that $\lambda_i$ and $\gamma_i$ (resp. $\mu_j$ and $\delta_j$) are the co-eigenvalues of $A(G)$ and $A_{d_G}(G)$ (resp. $A(H)$ and $A_{d_H}(H)$) for $i=1,2,\dots,n$ (resp. $j=1,2,\dots,m$). Then the AD-spectrum of $G \Box H$ is given by
			
			$$\lambda_i+\mu_j+(d_G+d_H)(\gamma_l \delta_k),
			$$ for $i,l=1,2,\dots,n$ and $j,k=1,2,\dots,m$.
		\end{thm}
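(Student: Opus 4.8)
The plan is to write $AD(G \Box H)$ as a sum of Kronecker products dictated by the metric structure of the Cartesian product, and then to diagonalize it simultaneously on the tensor basis built from the two families of co-eigenvectors. The starting point is the additivity of distance in a Cartesian product,
\[
d_{G \Box H}\big((u_i,v_j),(u_k,v_\ell)\big) = d_G(u_i,u_k) + d_H(v_j,v_\ell),
\]
from which the diameter of $G\Box H$ equals $d_G+d_H$. Two consequences are needed. First, the standard factorization of the adjacency matrix, $A(G\Box H) = A(G)\otimes I_m + I_n \otimes A(H)$. Second --- and this is the key structural observation --- a pair of vertices lies at the maximal distance $d_G+d_H$ exactly when both coordinate distances are maximal, i.e.\ when $d_G(u_i,u_k)=d_G$ and $d_H(v_j,v_\ell)=d_H$; hence the furthest-distance matrix factors as $A_{d_G+d_H}(G\Box H) = A_{d_G}(G)\otimes A_{d_H}(H)$. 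Combining these with the definition $AD(G\Box H) = A(G\Box H) + (d_G+d_H)A_{d_G+d_H}(G\Box H)$ yields
\[
AD(G \Box H) = A(G)\otimes I_m + I_n\otimes A(H) + (d_G+d_H)\,A_{d_G}(G)\otimes A_{d_H}(H).
\]

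Next I would exploit the co-eigenvalue hypothesis. Since $G$ is distance regular, $A_{d_G}(G)$ is a polynomial in $A(G)$, so the two matrices commute and share a common orthonormal eigenbasis $X_1,\dots,X_n$ with $A(G)X_i = \lambda_i X_i$ and $A_{d_G}(G)X_i = \gamma_i X_i$; likewise $H$ furnishes a common eigenbasis $Y_1,\dots,Y_m$ with $A(H)Y_j=\mu_j Y_j$ and $A_{d_H}(H)Y_j = \delta_j Y_j$. These are exactly the co-eigenvector pairings assumed in the statement. Applying the three Kronecker summands to $X_i\otimes Y_j$ and using $(P\otimes Q)(x\otimes y) = (Px)\otimes(Qy)$ gives
\[
AD(G\Box H)(X_i\otimes Y_j) = \big(\lambda_i + \mu_j + (d_G+d_H)\gamma_i\delta_j\big)(X_i\otimes Y_j).
\]
Because $\{X_i\otimes Y_j : 1\le i\le n,\ 1\le j\le m\}$ is a basis of the $nm$-dimensional space (being the tensor product of two bases), these vectors exhaust the spectrum, delivering precisely the claimed eigenvalues.

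I expect the main obstacle to be justifying the tensor factorization of the furthest-distance matrix, $A_{d_G+d_H}(G\Box H)=A_{d_G}(G)\otimes A_{d_H}(H)$: one must verify that the maximal total distance is attained if and only if each coordinate separately attains its maximum, which rests on the additivity of Cartesian distances together with the elementary fact that a sum of two bounded quantities is maximized only when each summand is. A secondary point to treat carefully is the index matching in the diametral term: the eigenvector $X_i\otimes Y_j$ forces the contribution to be $\gamma_i\delta_j$ with the indices tied to those of $\lambda_i$ and $\mu_j$ (the natural pairing), and the degenerate case $d_G+d_H=1$, where adjacency and diametral distance would coincide, should be excluded so that the three Kronecker summands describe disjoint entry patterns.
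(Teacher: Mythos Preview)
Your proposal is correct and follows essentially the same route as the paper: both derive the Kronecker decomposition $AD(G\Box H)=A(G)\otimes I_m+I_n\otimes A(H)+(d_G+d_H)\,A_{d_G}(G)\otimes A_{d_H}(H)$ from the additivity of Cartesian distances, invoke distance regularity so that $A_{d_G}(G)$ and $A_{d_H}(H)$ are polynomials in the respective adjacency matrices, and then evaluate on the tensor basis $X_i\otimes Y_j$. Your observation that the indices in the diametral term must be tied, giving $\gamma_i\delta_j$ rather than an independent $\gamma_l\delta_k$, is exactly what the paper's own computation produces as well; the free indices $l,k$ in the displayed statement appear to be a typographical slip.
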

		\begin{proof}
			Let $V(G)=\{u_1,u_2,\dots,u_n\}$ and $V(H)=\{v_1,v_2,\dots,v_m\}$. Let $u=(u_1,v_1)$, $v=(u_2,v_2)$ $\in V(G) \times V(H)$. Then the $(u,v)$-th entry of $AD(G \Box H)$ is given by
			\[AD(G \Box H)_{u,v}=
			\begin{cases}
				1, & \hspace{-8cm} \text{if either } d_G(u_1,u_2)=1 \text{ and } d_H(v_1,v_2)=0 \\
				\hspace{3cm} \text{or } d_G(u_1,u_2)=0, \text{ and } d_H(v_1,v_2)=1;\\
				d_G+d_H, & \hspace{-8cm} \text{if } d_G(u_1,u_2)=d_G \text{ and } d_H(v_1,v_2)=d_H;\\
				0, & \hspace{-8cm} \text{otherwise}.
			\end{cases}
			\] 
    By a suitable arrangement of the vertices of $G \Box H$, we obtain
	\begin{equation}\label{e7.1}
	AD(G \Box H)
	= (A(G) \otimes I_m) + (I_n \otimes A(H))
	+ (d_G+d_H)\, (A_{d_G}(G) \otimes A_{d_H}(H)).
	\end{equation}

	Since $G$ and $H$ are distance regular,
	$A_{d_G}(G)$ and $A_{d_H}(H)$ can be written as 
	$A_{d_G}(G) = p_{d_G}(A(G))$ and $A_{d_H}(H) = q_{d_H}(A(H))$, 
	where $p_{d_G}(x)$ and $q_{d_H}(x)$ are polynomials of degrees $d_G$ and $d_H$, respectively. Therefore,~\eqref{e7.1} becomes
	\[
	\begin{aligned}
	AD(G \Box H) 
	&= A(G \Box H) + (d_G+d_H)\, (p_{d_G}(A(G)) \otimes q_{d_H}(A(H))).
	\end{aligned}
	\]

	Let $X_i$ and $Y_j$ be the eigenvectors corresponding to the eigenvalues 
	$\lambda_i$ of $A(G)$ and $\mu_j$ of $A(H)$, respectively. Then
	\[
	\begin{aligned}
	AD(G \Box H) \, (X_i \otimes Y_j)
	&= \Big(A(G \Box H) 
	+ (d_G+d_H)\, (p_{d_G}(A(G)) \otimes q_{d_H}(A(H)))\Big)(X_i \otimes Y_j) \\
	&= A(G \Box H)(X_i \otimes Y_j)
	+  (d_G+d_H)\, \Big(p_{d_G}(A(G)) \otimes q_{d_H}(A(H))\Big)(X_i \otimes Y_j) \\
	&= (\lambda_i + \mu_j)(X_i \otimes Y_j)
	+ (d_G+d_H)\, (p_{d_G}(\lambda_i) q_{d_H}(\mu_j)) (X_i \otimes Y_j)\\
	&= (\lambda_i + \mu_j+(d_G+d_H)(\gamma_i \delta_j)) (X_i \otimes Y_j).
	\end{aligned}
	\]
	Hence $X_i \otimes Y_j$ is an eigenvector of $AD(G \Box H)$
	corresponding to the eigenvalue $\lambda_i + \mu_j+(d_G+d_H)(\gamma_i \delta_j)$ as desired.
		\end{proof}
		
		\section*{Acknowledgment} First author is supported by The Gandhigram Rural Institute under GRI Research Fellowship.

\end{document}